\documentclass{amsart}

\usepackage{latexsym}
\usepackage{amssymb}
\usepackage{graphicx}

\usepackage[hang,small,bf]{caption}

\setcounter{MaxMatrixCols}{20}

\usepackage{color}
\usepackage{fullpage}

\newtheorem{THM}{Theorem}
\newtheorem{Lemma}[THM]{Lemma}

\newtheorem{Cor}[THM]{Corollary}

\newtheorem{rem}{Remark}

\newcommand{\bR}{\mathbb{R}}
\newcommand{\bC}{\mathbb{C}}
\newcommand{\be}{\begin{equation}}
\newcommand{\ee}{\end{equation}}

\newcommand{\Zed}{\mathbb{Z}}

\newcommand{\lp}{\left(}
\newcommand{\rp}{\right)}
\newcommand{\lb}{\left[}
\newcommand{\rb}{\right]}
\newcommand{\lc}{\left\{}
\newcommand{\rc}{\right\}}
\newcommand{\lab}{\left|}
\newcommand{\rab}{\right|}

\newcommand{\sB}{\mathcal{B}}

\newcommand{\GE}{\Gamma_{\mathrm{even}}}
\newcommand{\GO}{\Gamma_{\mathrm{odd}}}
\newcommand{\TV}{\mathrm{TV}}

\renewcommand{\d}[1]{\,d#1}

\newcommand{\eps}{\varepsilon}

\DeclareMathOperator{\supp}{supp}
\DeclareMathOperator{\set}{set}
\DeclareMathOperator{\card}{card}

\begin{document}

\title{A pseudo-probabilistic approach to the dilation equation for wavelets}
\author[Dumnich \and Neel]{Sarah Dumnich \and Robert W.\ Neel}
\address{Department of Mathematics, Frostburg State University, Frostburg, MD, USA}
\email{scdumnich@frostburg.edu}
\address{Department of Mathematics, Lehigh University, Bethlehem, PA, USA}
\email{robert.neel@lehigh.edu}

\begin{abstract} The dilation equation arises naturally when using a multiresolution analysis to construct a wavelet basis. We consider solutions in the space of signed measures, which, after normalization, can be viewed as pseudo-probability measures. Using probabilistic ideas as well as a notion of binary expansions, we discuss the existence and absolute continuity of solutions in dimensions one and two, efficiently recovering previous results in two natural cases. A central role is played by the development of two variants of the classical cascade algorithm, which are adapted to the pseudo-probabilistic and elementary number theoretic context.
\end{abstract}
\subjclass[2010]{42C40, 60A10}
\keywords{dilation equation, weak convergence of pseudo-probability measures, binary representations, self-similar tilings}

\maketitle

\section{Introduction}

In the 80's, Mallat \cite{Mallat89} and Meyer \cite{Meyer} formalized multiresolution analysis (MRA), which set the groundwork for the construction of wavelet bases. A central role in this approach is played by the dilation equation; indeed, a solution to the dilation equation, called a scaling function, canonically determines a corresponding wavelet basis \cite{Daubechies88}. Let $\Gamma \subset \mathbb{R}^d$ be a lattice and let $M$ be an integer-valued \textit{expanding} matrix. That is, all eigenvalues of $M$ are greater than 1 in absolute value, so that $M$ preserves the lattice. Then a \textit{dilation equation} is an equation of the form 
\begin{equation}\label{dile}
\phi(x)= |\det M | \sum_{k \in \Gamma} p_{k}\phi(Mx- k),
\end{equation}
where $\{p_k\}$ is a sequence (indexed by the countably infinite set $\Gamma$) of real numbers. If $\{p_{k}\} $ is in $l^2(\Gamma)$, then the dilation equation always has a solution $\phi$ in the distributional sense (see \cite{Daubechies92}). Functional solutions (meaning solutions $\phi$ that are functions) to dilation equations are useful in many applications such as subdivision schemes, interpolation methods, and the construction of wavelet bases of $L^2(\mathbb{R}^d)$ \cite{Strang, Daubechies92, Heil94}.  Depending on conditions placed upon $\{p_{k}\}$, a functional solution can be a scaling function (meaning that its translates form an orthonormal basis in the MRA) or merely a prescale function (meaning that its translates form a Riesz basis).

Dobric, Gundy, and Hitczenko (in the one-dimensional case) \cite{DobricGundy} and Belock and Dobric (in the multi-dimensional case) \cite{Dobric} considered a probabilistic approach to equation \eqref{dile}. This is natural because, under the condition that all $p_{\gamma}$ are non-negative, the right-hand side of the dilation equation can be interpreted as the convolution of two probability measures. This allowed them to make use of probabilistic tools and to look for (generalized) solutions given by probability measures, which form a natural class intermediate between functional and distributional solutions. They then further discuss conditions under which the probability measure was absolutely continuous and thus had a density which gives a functional solution. Though this approach is appealing, it suffers from the serious drawback that one does not wish to restrict attention to cases in which both the $p_{\gamma}$ and the $\phi$ are single-sign. Most of the important and well-known scaling functions, including those of Daubechies' wavelets \cite{Daubechies88}, change sign. Thus, the starting point of the present paper is to generalize this approach to allow (finite) signed measures as solutions to the dilation equation. From one perspective, this means (possibly up to normalization) replacing probability measures by ``pseudo-probability measures'' (see Baez-Duarte \cite{Duarte} for an introduction and further references) This introduces some technical complications, since, for example, the Portmanteau Theorem and L\'{e}vy Convergence Theorem do not carry over to this generalized setting.

A second line of research which informs the present paper is the use, by Gundy-Jonsson \cite{Gundy10} and Gundy \cite{GundySurvey}, of a binary sequence encoding of elements of $\bR$ and $\bR^2$ in order to connect scaling functions on these two spaces. We make consistent use of this encoding, and for this reason we restrict our attention to appropriate dilations in dimensions one and two.

Measure-valued solutions to the dilation equation \eqref{dile} are given by solutions to the dilation equation for measures \eqref{Eqn:DEForMeas}. We consider the most common choices for $M$ in dimensions one and two, and we consider the cases when $\{p_k\}_{k\in\Gamma}$ has finite support and satisfies either a probability condition or a type of orthonormality condition. In terms of the existence and uniqueness of a measure-valued solution and the existence of a density, we give an efficient and unified proof of the key results of Belock and Dobric under the probability condition and of the classical results (as given in \cite{Lawton97}) under the orthonormality condition. In contrast to most of the wavelet literature, we make no use of the Fourier transform, instead choosing to highlight the ideas coming from pseudo-probability and binary expansions. Moreover, we develop two recursive schemes for computing the solution measure $\mu$, both of which are probabilistically-motivated variants on the classical cascade algorithm (see \cite{Lawton97} for a description in any dimension). One gives $\mu$ as the weak limit of a sequence of discrete pseudo-probability measures; these approximating discrete measures arise naturally from the binary sequence representation mentioned above. The second scheme proceeds by computing the conditional expectation of the pseudo-probability on successively more refined $\sigma$-algebras. Since these $\sigma$-algebras are generated by translations and dilations of tiles, these conditional pseudo-probabilities are equivalent (up to linear algebra) to the wavelet coefficients of the density of $\mu$ with respect to the Haar basis of the tiles. Thus these approximations are natural from the perspective both of probability and of harmonic analysis and wavelet bases.

We note that probabilistic methods have been used to good effect when $p_k$ is allowed to have infinite support in order to give necessary conditions and sufficient conditions for a (functional) solution to the dilation equation to be a scaling function in \cite{DobricGundy}, \cite{Gundy2000}, \cite{Gundy07}, \cite{Gundy10}, and \cite{GundySurvey} (indeed, the binary coding mentioned above was developed in this context). This line of research was extended to arbitrary dimension and to pre-scale functions by Curry \cite{Curry}. We also note that an alternative approach to multi-dimensional wavelets, in which additional dilations are introduced to simplify the geometry, is described in \cite{LWW}.
 
The structure of the paper is as follows. In Section \ref{Sect:Prelim}, we give the relevant background on our choices of dilation matrix $M$, the elementary number theory behind the corresponding binary encoding scheme, the dilation equation for measures, and conditional pseudo-probability. In Section \ref{Sect:MainResults}, we introduce our series of discrete approximating measures and prove their weak convergence, giving the existence of a the unique solution measure $\mu$, under both the probability and orthonormality conditions. We also give results on the existence of a density for $\mu$; this is automatic under the orthonormality conditions, but requires additional assumptions on the $p_k$ under the probability conditions. In Section \ref{Sect:OneDComp}, we briefly present the recursive computation of $\mu$ via conditional expectations in the one-dimensional case by illustrating it for the D4 wavelet. In Section \ref{Sect:GundyMethod}, we return to the binary encoding scheme and make the point that, while it gives a useful correspondence between scaling functions in one and two dimensions, this correspondence does not preserve continuity of such functions, and thus does not completely reduce the study of dimension two to that of dimension one. In Section \ref{Sect:TwoDComp}, we develop the computation of $\mu$ by conditional expectations in more detail in the two-dimensional case, where the geometry is more complicated. We do this by giving a pair of examples illustrating the approach, one each under the probability and orthonormality conditions.

\section*{Acknowledgements} This paper is partially based on the first author's PhD dissertation. The problem was originally suggested by her advisor, Vladimir Dobric. Unfortunately, Dobric passed away before the dissertation could be written, and the second author took over his role as advisor.

We are grateful to Richard Gundy for helpful discussions.

\section{Preliminaries}\label{Sect:Prelim}

We briefly discuss the elementary number theory underlying our approach.

Given a dilation $M$ on $\bR^n$, let $\mathcal{D}$ be a complete set of coset representatives for $\mathbb{Z}^n/M(\mathbb{Z}^n)$. We assume that $\mathcal{D}$, called the \textit{digit set}, contains the zero vector. Let $\mathbb{P}$ denote the set of all $k \in \mathbb{Z}^n$ that can be written as a finite sum of the form $$k = \sum_{j=0}^{N(k)}M^jd_j$$ with $d_j \in \mathcal{D}$. The pair $(M, \mathcal{D})$ is called a number system if $\mathbb{P} = \mathbb{Z}^n$. In this case $M$ is said to be the \textit{radix} of the system. If the digit set consists of all nonnegative multiples, $m=0, 1, ..., (q-1)$, of a single coordinate unit vector the system is called canonical \cite{Gundy10}.

Lagarias and Wang \cite{Lagarias} have classified all expanding matrices in $\mathbb{R}^2$, up to integral similarity by a unimodular matrix $U \in M_2(\mathbb{Z})$. Their list is as follows: if $\det (M)=-2$, 
\[
M \sim C_1 = \left( \begin{array}{cc}
0 & 2 \\
1 & 0 \end{array} \right)
\]
is the canonical representative of the class. If $\det (M) =2$ there are five classes, defined by the following canonical representatives:
\[
C_2 = \left( \begin{array}{cc}
0 & 2 \\
-1 & 0 \end{array} \right)\\
\]
\[
\pm C_3 = \pm \left( \begin{array}{cc}
1 & 1 \\
-1 & 1 \end{array} \right)\\
\]
\[
\pm C_4 = \pm \left( \begin{array}{cc}
0 & 2 \\
-1 & 1 \end{array} \right).
\]
For each of these cases, a digit set $\mathcal{D}$ exists such that the set $T(M, \mathcal{D}) = \left\{ \sum_{j=1}^{\infty} M^{-j} d_j \right\}$ is a \textit{tile} \cite{Gundy10}, where a tile $T$ is a subset of the plane such that translations of $T$ by Gaussian integers $\gamma$ are disjoint up to a set of Lebesgue measure 0 and $\cup_{\gamma} \lp T+\gamma \rp = \mathbb{R}^2$. The following theorem from Gundy and Jonsson \cite{Gundy10} summarizes their results regarding these classes of dilation.
\begin{THM}
For no choice of $\mathcal{D}$ is $(C_1, \mathcal{D})$ a number system. The matrices $C_2$, $-C_3$, $\pm C_4$ all generate number systems with the canonical digit set $\mathcal{D}_1=\{0, \epsilon_1\}$, where $\epsilon_1 = (1, 0)'$. The pair $(+C_3, \mathcal{D}_1)$ generates a self-affine tile $T(+C_3, \mathcal{D}_1)$, but for no digit set $\mathcal{D}$ is $(+C_3, \mathcal{D})$ a number system.
\end{THM}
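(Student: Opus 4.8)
The plan is to split the statement into three pieces---the tiling assertion for $(+C_3,\mathcal D_1)$, the negative number-system claims for $C_1$ and $+C_3$, and the positive number-system claims for $C_2,-C_3,\pm C_4$---and to dispose of the tiling piece immediately. Since $C_3\mathbb Z^2=\{(x,y)\in\mathbb Z^2:x+y\text{ is even}\}$, the set $\mathcal D_1=\{0,\epsilon_1\}$ is a complete set of coset representatives for $\mathbb Z^2/C_3\mathbb Z^2$; combined with the fact that $C_3$ is expanding, the self-affine tiling results of Lagarias and Wang \cite{Lagarias} then give that $T(+C_3,\mathcal D_1)$ is an integral self-affine tile. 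No radix property is needed here, which is exactly why this case can occur at all.

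For the negative claims the plan is to prove a single lemma subsuming both matrices: \emph{if $M$ is an expanding integer matrix with $\det(M-I)=\pm1$, then $(M,\mathcal D)$ is a number system for no complete residue system $\mathcal D$ containing $0$.} The mechanism is that, $M-I$ being unimodular, we may pick a nonzero $d\in\mathcal D$ and set $w:=-(M-I)^{-1}d\in\mathbb Z^n$; the finite geometric identity $\sum_{j=0}^{N}M^j=(M^{N+1}-I)(M-I)^{-1}$ shows $w\equiv\sum_{j=0}^{N}M^jd\pmod{M^{N+1}\mathbb Z^n}$ for all $N$, i.e.\ the constant string ``all digits equal to $d$'' is the (non-terminating) base-$M$ expansion of $w$. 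If $w$ had a finite expansion $w=\sum_{j=0}^{L}M^je_j$ with $e_j\in\mathcal D$, then reducing the difference of the two expansions modulo $M^{N+1}\mathbb Z^n$ for $N>L$ and peeling off digits one at a time---using that distinct elements of $\mathcal D$ lie in distinct residue classes---forces $e_j=d$ for every $j\le N$, contradicting $e_{L+1}=0\neq d$. One then checks $\det(C_1-I)=-1$ and $\det(C_3-I)=1$. I expect this to be the cleanest step.

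For the positive claims the plan is to reduce to a canonical number system in companion form. For each $M\in\{C_2,-C_3,C_4,-C_4\}$ one verifies in one line that $\{\epsilon_1,M\epsilon_1\}$ is a $\mathbb Z$-basis of $\mathbb Z^2$; the unimodular matrix $U$ mapping the basis $(\epsilon_1,M\epsilon_1)$ to the basis $(\epsilon_1,\epsilon_2)$ conjugates $M$ to the companion matrix $A$ of its characteristic polynomial $x^2+a_1x+2$ and fixes $\mathcal D_1$ (it fixes $\epsilon_1$, hence $\{0,\epsilon_1\}$). Since the radix property is preserved by such a conjugation, $(M,\mathcal D_1)$ is a number system iff $x^2+a_1x+2$ is a canonical number system polynomial, which by the classical degree-two criterion holds exactly when $-1\le a_1\le 2$. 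This is satisfied by the polynomials $x^2+2$, $x^2+2x+2$, $x^2-x+2$, $x^2+x+2$ of $C_2,-C_3,C_4,-C_4$ and fails, with $a_1=-2$, for the polynomial $x^2-2x+2$ of $+C_3$---consistent with the lemma above. For $C_2$ the citation can be avoided altogether: $C_2^2=-2I$ forces any expansion to split into two independent base-$(-2)$ expansions along the basis $\{\epsilon_1,C_2\epsilon_1\}=\{(1,0)',(0,-1)'\}$, and base $-2$ with digit set $\{0,1\}$ represents every integer, so every lattice point is hit.

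The main obstacle---and the reason the companion reduction is worth its small overhead---is a self-contained proof of the positive direction for $-C_3$ and $\pm C_4$ that does not quote the CNS classification. That would mean running the greedy digit map $k\mapsto M^{-1}(k-d(k))$, where $d(k)\in\mathcal D$ is the coset representative of $k$, choosing a norm adapted to $M$ in which $M^{-1}$ is a strict contraction (each of these matrices has both eigenvalues of modulus $\sqrt2$; for $-C_3$, which is $\sqrt2$ times a rotation, the Euclidean norm already works and traps every orbit inside $\|k\|\le1+\sqrt2$), thereby confining all orbits to an explicit finite set of lattice points, and finally checking by hand that the only cycle in that set is $\{0\}$. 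This is routine but tedious, and is where the argument would be most likely to get long.
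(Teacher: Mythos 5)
Your proposal is sound, but note that the paper itself offers no proof of this statement: it is imported verbatim from Gundy--Jonsson \cite{Gundy10} (with the tiling facts resting on Lagarias--Wang \cite{Lagarias}), so there is no internal argument to compare against. Judged on its own terms, your decomposition works. The core of your negative direction --- that $\det(M-I)=\pm1$ forces $w=-(M-I)^{-1}d\in\Zed^2$ to have the non-terminating constant expansion $d,d,d,\ldots$, and the digit-peeling argument (valid because distinct digits lie in distinct cosets and $M$ is injective on $\Zed^2$) rules out any finite expansion --- is correct and is exactly the classical obstruction that separates $1+i$ from $-1+i$; the determinant computations $\det(C_1-I)=-1$ and $\det(C_3-I)=1$ check out. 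The companion-form reduction for the positive cases is also correct ($\{\epsilon_1,M\epsilon_1\}$ is a $\Zed$-basis in each case, the change of basis fixes $\mathcal{D}_1$, and the characteristic polynomials $x^2+2$, $x^2+2x+2$, $x^2\mp x+2$ all satisfy $-1\le a_1\le 2$ while $x^2-2x+2$ does not), though here you are trading one citation for another: the quadratic CNS criterion of K\'atai--Kov\'acs/Gilbert does the real work, and you candidly flag that a self-contained replacement (greedy digit map, adapted norm, finite orbit check) would be routine but lengthy. The one mild soft spot is the tiling claim for $(+C_3,\mathcal{D}_1)$: positivity of the Lebesgue measure of the attractor for a standard digit set, and the fact that it tiles by the full lattice $\Zed^2$ rather than a proper sublattice, are both being absorbed into the Lagarias--Wang citation; that is acceptable at this level of granularity but worth stating explicitly since the paper's definition of tile requires translates by all Gaussian integers. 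Overall: correct, appropriately modular, and more informative than the paper, which merely quotes the result.
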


Thus, the pair $(+C_3, \mathcal{D}_1)$ is the exceptional case in the list in that it generates a self-affine tile but does not generate a number system, and this is the case we focus on. We find it easier to identify $\mathbb{R}^2$ with the complex plane $\mathbb{C}$ in order to simplify computations and notation. In this case, multiplication by the matrix $+C_3$ is equivalent to multiplication by $1+i$. The expressively-named \textit{Twin Dragon} (see Figure \ref{Fig:TwinD}) is the tile which is generated by $(+C_3, \mathcal{D}_1)$ and can we written as the following: 
\begin{equation}\label{Eqn:TwinDragon}
T= \left\{   \sum_{j=1}^{\infty} \frac{\gamma_j}{(1+i)^j} :  \gamma_j \in \{ 0, 1 \} \quad \forall j  \right\}.
\end{equation}

\begin{figure}
\centering
  \includegraphics[width=\linewidth]{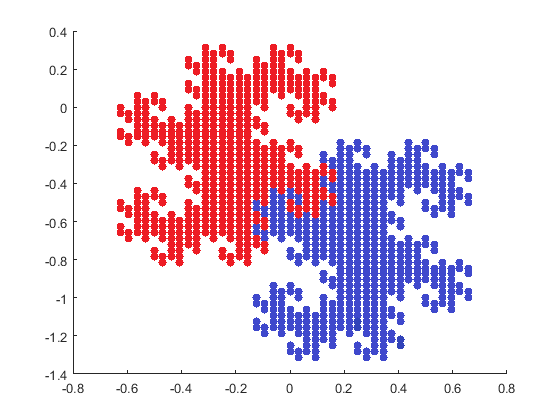}
  \caption{The Twin Dragon $T$. The two sub-regions are the half-tiles, corresponding to points having ``binary expansion'' beginning with 0 and 1, respectively.}
\label{Fig:TwinD}
\end{figure}

Further, we see that there is a natural parallelism with the real line. Of course, the unit interval $[0,1]$ tiles $\bR$ (under translations by the integers), and the unit interval can be expressed in binary as
\[
[0,1]= \left\{   \sum_{j=1}^{\infty} \frac{\gamma_j}{2^j} : \gamma_j \in \{ 0, 1 \} \quad \forall j  \right\},
\]
which is analogous to the construction of the twin dragon. In particular, every real number can be expressed an an integer plus a binary expansion of the above form. Similarly, every point in $\bR^2$ can be expressed as a Gaussian integer plus a binary expansion in the sense of equation \eqref{Eqn:TwinDragon}.

We wish to consider the dilation equation on $\bR$ when the lattice is $\Zed$ and $M$ corresponds to multiplication by $2$ and on $\bR^2\cong \bC$ when the lattice is $\Zed^2$ and $M$ corresponds to multiplication by $1+i$, and we wish to so so in a unified way. Motivated by the above, in what follows we let $\Gamma$ denote either $\Zed$ or $\Zed^2$ and $M$ multiplication by $2$ or $1+i$, as appropriate. Of course, this is consistent with the general dilation equation for functions \eqref{dile}, but moreover, it means that for either $\bR$ or $\bR^2$, we can represent points in the form $k+\sum_{j=1}^{\infty} M^{-j}\gamma_j$ where $k\in\Gamma$ and $\gamma_j\in\{0,1\}$. Note that reinterpreting $M$ as either $2$ or $1+i$ gives a map from $[0,1]$ to $T$ which is well-defined up to a set of measure zero (namely, the set of points that have non-unique ``binary'' expansions). In fact, this map is an $L^2$ isomorphism (where $L^2$ is understood with respect to Lebesgue measure in the relevant dimension). Further, any bijection between $\Zed$ and $\Zed^2$ extends this to an isometry between $L^2(\bR)$ and $L^2(\bR^2)$. This idea underlies Gundy's work \cite{Gundy10} connecting scaling functions on $\bR$ and $\bR^2$, which we discuss in a bit more detail in Section \ref{Sect:GundyMethod}.

In this paper, we deal primarily with the dilation equation for measures, namely
\begin{equation}\label{Eqn:DEForMeas}
\mu (A) = \sum_{k\in \Gamma} p_k \mu (MA-k) \quad\text{for all Borel sets $A$},
\end{equation}
where $\mu$ is a pseudo-probability measure, that is, a (finite) signed measure such that $\mu(\bR^n)=1$. As mentioned, we develop two recursive methods for computing a solution $\mu$, both of which are related to binary expansions. First, in Section \ref{Sect:DiscApprox}, we develop a sequence of discrete approximations, where the $n$th approximation is supported on points which have binary expansions terminating after $n$ places (thus essentially working on the dyadic rationals and the natural two-dimensional analogue). Second, in Sections \ref{Sect:OneDComp} and \ref{Sect:TwoDComp}, we compute $\mu$ via (a version of) the conditional expectation of $\mu$ with respect to the underlying Lebesgue measure, and which we now explain.

First, assume that $\mu$ is absolutely continuous (with respect to Lebesgue measure), and let $T'$ denote either $[0,1]$ or the twin dragon $T$, as appropriate (that is, $T'$ is the fundamental tile). Now let $\Sigma_0$ be the $\sigma$-algebra generated by all such translates of $T'$, that is, $\Sigma_0$ is generated by $\{k+T'\}_{k\in\Gamma}$. Then let $\Sigma_n=M^{-1}\Sigma_{n-1}=M^{-n}\Sigma_0$ for $n=1,2,\ldots$. Note that the $\Sigma_n$ are nested, $\Sigma_n\subset \Sigma_{n+1}$, and the collection $\{\Sigma_n\}_{n\in\{0,1,2,\ldots\}}$ generates the Borel $\sigma$-algebra. (The nesting property shows that the $\Sigma_n$ are in fact a filtration.) Then the restriction $\mu|_{\Sigma_n}$ of the pseudo-probability measure $\mu$ to $\Sigma_n$ is the conditional expectation of the density of $\mu$ given $\Sigma_n$. Further, knowing $\mu|_{\Sigma_n}$ for all $n$ determines $\mu$ (since it determines the density), and we will develop a method for recursively computing them in this case. In particular, we can recursively compute the measures of the (dilated, translated) tiles, $\mu\lp M^{-n}(k+T')\rp$, and since $\mu$ is absolutely continuous, this determines $\mu|_{\Sigma_n}$. As mentioned, this is natural from a measure-theoretic perspective, but also from a wavelet point of view. In particular, the Haar wavelets (where, in $\bR^2$, we understand the Haar wavelets with respect to the twin dragon) at scale $n$ are $\Sigma_n$-measurable. Thus, if $\mu$ has a density $\phi$ in $L^2$, knowing $\mu|_{\Sigma_n}$ is equivalent, up to linear algebra, to knowing the wavelet coefficients of $\phi$ with respect to the scale-$n$ Haar wavelets.

In the case when $\mu$ is not absolutely continuous, we nonetheless think of $\mu|_{\Sigma_n}$ as a natural generalization of the conditional expectation. However, in this case, $\mu|_{\Sigma_n}$ is not necessarily determined by knowing $\mu\lp M^{-n}(k+T')\rp$ for all $n$ and $k$, essentially because the tiles overlap on their boundaries (and thus the tiles don't form a $\pi$-system, and the boundaries need not be $\mu$-null). The natural solution is to modify the tiles in order to give a true partition at every scale, not merely a tiling. In dimension 1, this is straightforward. Let $\tilde{T}=(0,1]$, and let $\tilde{\Sigma}_n$ be the $\sigma$-algebra generated by $\{M^{-n}(k+\tilde{T})\}_{k\in \Zed}$, analogous to the above. Then the $\tilde{\Sigma}_n$ are nested, giving a filtration (which means we don't just have a partition for each $n$, but one which is compatible with the dilation equation in the obvious way), and further, the measures $\mu\lp M^{-n}(k+\tilde{T})\rp$ of the translates of $M^{-n}\tilde{T}$ determine $\mu|_{\tilde{\Sigma}_n}$. Thus, as before, we can recursively compute the (generalized) condition expectations $\mu|_{\tilde{\Sigma}_n}$, which determine $\mu$, now whether or not $\mu$ is absolutely continuous.

Extending this procedure to two dimensions requires a modification of the twin dragon (by removing part of the boundary)  to give a partition of $\bR^2$ compatible with the dilation equation (that is, to get a two-dimensional version of $\tilde{T}=(0,1]$). Unfortunately, such a modification of the twin dragon does not appear in the literature, as far as we know, and it seems too far afield to pursue the construction of such a partition in the present paper. (We briefly return to this point in Section \ref{Sect:TwoDComp}.)

\section{Solving the dilation equation for measures}\label{Sect:MainResults}

As already discussed, we wish to consider the dilation equation equation for (finite, signed) measures \eqref{Eqn:DEForMeas} on $\bR$ when the lattice $\Gamma$ is $\Zed$ and $M$ corresponds to multiplication by $2$ and on $\bR^2\cong \bC$ when the lattice $\Gamma$ is $\Zed^2$ and $M$ corresponds to multiplication by $1+i$. Again, we assume $\{p_k\}_{k\in\Gamma}$ is a real-valued function on $\Gamma$ with finite support. The consideration of the dilation equation for measures is motivated in part by the fact that if a measure satisfies this equation and it has a density, then the density satisfies the dilation equation for functions almost everywhere. In particular, if the density is continuous, it satisfies the dilation equation for functions without qualification.

\begin{Lemma}
The density $\phi$ of an absolutely continuous solution $\mu$ of the dilation equation for signed measures satisfies the functional dilation equation almost everywhere.
\end{Lemma}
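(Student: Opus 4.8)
The plan is to convert the dilation equation for measures into an identity between integrals over arbitrary Borel sets, and then to invoke the standard fact that two $L^1$ functions with the same integral over every Borel set agree Lebesgue-almost everywhere. To begin, I would use absolute continuity to write $\mu(A)=\int_A \phi(x)\,dx$, noting that $\phi\in L^1(\bR^n)$ because $\mu$ is a finite signed measure.

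The key computation is a change of variables in each term $\mu(MA-k)$. Substituting $y=Mx-k$, so that $x=M^{-1}(y+k)$ and $dy=|\det M|\,dx$, and observing that $y\in MA-k$ precisely when $x\in A$, one obtains
\[
\mu(MA-k)=\int_{MA-k}\phi(y)\,dy=|\det M|\int_A \phi(Mx-k)\,dx ,
\]
and each function $x\mapsto\phi(Mx-k)$ again lies in $L^1(\bR^n)$ (with norm $\|\phi\|_1/|\det M|$), so no integrability question arises. Since $\{p_k\}_{k\in\Gamma}$ has finite support, the series $\sum_{k\in\Gamma}p_k\,\mu(MA-k)$ is a finite sum and may be pulled inside the integral without comment. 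Combining these observations, the dilation equation for measures \eqref{Eqn:DEForMeas} becomes
\[
\int_A \phi(x)\,dx=\int_A\Bigl(|\det M|\sum_{k\in\Gamma}p_k\,\phi(Mx-k)\Bigr)\,dx\qquad\text{for every Borel set }A .
\]
As both integrands belong to $L^1$, equality of their integrals over all Borel sets forces $\phi(x)=|\det M|\sum_{k\in\Gamma}p_k\,\phi(Mx-k)$ for almost every $x$, which is exactly the functional dilation equation \eqref{dile}.

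Finally, I would point out that the almost-everywhere statement is well posed even though $\phi$ is only determined up to a null set: altering $\phi$ on a Lebesgue-null set $N$ changes the left-hand side only on $N$ and the right-hand side only on a finite union of sets of the form $M^{-1}(N+k)$, each of which is null since $M$ is invertible. I do not expect any genuine obstacle here; the argument is routine, and the only points that require care are getting the Jacobian and the set identity $\{x:Mx-k\in A\}=A$ right in the change of variables, and checking that every function in sight is genuinely in $L^1$ so that the ``equal integrals over all Borel sets imply equal functions'' principle applies.
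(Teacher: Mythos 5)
Your proof is correct. The core computation --- writing $\mu(A)=\int_A\phi\,dx$, changing variables $y=Mx-k$ with Jacobian $|\det M|$ to get $\mu(MA-k)=|\det M|\int_A\phi(Mx-k)\,dx$, and summing the finitely many nonzero terms --- is exactly the same as in the paper. The only difference is the final step: you apply the identity to \emph{all} Borel sets $A$ and invoke the standard fact that two $L^1$ functions with equal integrals over every Borel set coincide almost everywhere, whereas the paper applies the identity only to balls $B(x_0,r)$ and lets $r\to 0$, invoking the Lebesgue Differentiation Theorem at Lebesgue points. Both conclusions are standard; yours is marginally more elementary (it needs only the uniqueness of densities, essentially the Radon--Nikodym uniqueness statement, rather than the differentiation theorem), while the paper's version has the small advantage of identifying the a.e.\ identity concretely at every common Lebesgue point of $\phi$ and the functions $\phi(M\cdot-k)$. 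Your added remarks --- that each $\phi(M\cdot-k)$ is in $L^1$ with norm $\|\phi\|_1/|\det M|$, and that the a.e.\ statement is insensitive to modifying $\phi$ on a null set because $M^{-1}(N+k)$ is null for null $N$ --- are correct and slightly more careful than what the paper records.
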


\begin{proof}
Let $\phi$ be the density of the solution to the dilation equation for signed measures, $\mu$.  Let $x_0$ be any point in $\supp ( \mu )$ and $r \in \mathbb{R}^+$.  Let $B(x_0, r)$ denote the ball of radius $r$ about the point $x$. Then applying the dilation equation for measures to $B(x_0, r)$ and using that $\mu$ has a density gives
\[
\begin{split}
 \int_{B(x_0, r)} \phi(x) dx &=\sum_k p_k \int_{M(B(x_0, r))-a_k}\phi(y) dy\\
\quad \quad &= \sum_k p_k |\det M| \cdot \int_{B(x_0, r)}\phi(Mx-a_k) dx . \\
\end{split}
\]
Now, taking the limit as $r \rightarrow 0$, the Lebesgue Differentiation Theorem gives the conclusion.
\end{proof}

Further, we suppose the coefficients $\{p_k\}$ satisfy the normalization condition
\[
\sum_{k\in\Gamma} p_k = 1 .
\]
Let $\mu_1 =\sum_{k\in\Gamma}p_{k} \delta_{M^{-1}k}$ (here $\delta_x$ is the delta mass at $x$), and note that the normalization condition implies that $\mu_1$ is a pseudo-probability measure. Let $D(x)=M^{-1}x$ (for $x$ in $\bR$ or $\bR^2$, as appropriate), and let $D_{\star}$ be the corresponding push-forward map on signed measures. Then we see that the dilation equation for measures can be re-written as
\[
\mu=\mu_1 \star D_{\star}(\mu).
\]

\subsection{Discrete approximate solutions}\label{Sect:DiscApprox}
The above suggests solving the dilation equation by a natural sequence of approximate solutions; namely, for $n\geq 2$, we iteratively define the pseudo-probability measures $\mu_n$ by $\mu_n=\mu_1 \star D_{\star}(\mu_{n-1}).$
Equivalently, we have 
\begin{equation}\label{Eqn:MuN}
\mu_n=\bigstar_{k=0}^{n-1} (D_{\star})^{k}(\mu_1) .
\end{equation}
Recall that weak convergence of signed measures is defined in terms of their integrals against bounded, continuous test functions. First, we observe that, if these approximations converge, the limit gives a solution to the dilation equation.

\begin{Lemma}
If $ \mu_n $ converges weakly to a limiting measure $\mu$, then $\mu$ satisfies the dilation equation for measures.
\end{Lemma}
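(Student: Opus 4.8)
The plan is to pass to the limit in the recursive relation $\mu_n = \mu_1 \star D_\star(\mu_{n-1})$. Since weak convergence of signed measures is defined against bounded continuous test functions, I would fix an arbitrary $f \in C_b$ and show that $\int f\,d\mu = \int f\,d(\mu_1\star D_\star(\mu))$, since the dilation equation for measures is equivalent to the fixed-point equation $\mu = \mu_1\star D_\star(\mu)$ as derived just above the statement. Unwinding the convolution, $\int f\,d(\mu_1\star D_\star(\nu)) = \sum_{k\in\Gamma} p_k \int f\bigl(M^{-1}k + M^{-1}x\bigr)\,d\nu(x)$; because $\{p_k\}$ has finite support, this is a finite sum, so all the interchanges of sum and integral are trivially justified.

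The key steps, in order: first, rewrite the recursion tested against $f$ as
\[
\int f\,d\mu_n = \sum_{k\in\Gamma} p_k \int g_k\,d\mu_{n-1}, \qquad g_k(x) := f\bigl(M^{-1}k + M^{-1}x\bigr),
\]
noting that each $g_k$ is again bounded and continuous (it is $f$ precomposed with an affine contraction). Second, use the hypothesis $\mu_n \rightharpoonup \mu$ to conclude $\int f\,d\mu_n \to \int f\,d\mu$ and, for each of the finitely many $k$ in the support, $\int g_k\,d\mu_{n-1}\to \int g_k\,d\mu$. Third, take $n\to\infty$ on both sides; the finiteness of the support means the right-hand side converges to $\sum_k p_k\int g_k\,d\mu = \int f\,d(\mu_1\star D_\star(\mu))$. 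Hence $\int f\,d\mu = \int f\,d(\mu_1\star D_\star(\mu))$ for all $f\in C_b$, and since a finite signed measure is determined by its integrals against bounded continuous functions, $\mu = \mu_1\star D_\star(\mu)$, which is the dilation equation for measures.

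One subtlety worth addressing explicitly is that $\{\mu_n\}$ consists of signed, not positive, measures, so one cannot invoke the Portmanteau theorem or tightness arguments; but this is not actually needed here, since weak convergence of signed measures against $C_b$ is exactly the hypothesis and is used directly. A second point is the passage from ``$\int f\,d\mu = \int f\,d\nu$ for all $f\in C_b$'' to ``$\mu=\nu$'': for finite signed (Borel, on $\bR$ or $\bR^2$) measures this holds because $C_b$ is separating, e.g.\ by approximating indicators of closed sets by bounded Lipschitz functions and applying a Hahn--Jordan/regularity argument, or simply by noting both $\mu$ and $\nu$ are determined by their actions on $C_c \subset C_b$.

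The main obstacle is essentially bookkeeping rather than conceptual: one must be careful that the test function $g_k$ obtained after ``absorbing'' one step of the convolution and dilation is still an admissible ($C_b$) test function, so that weak convergence of $\mu_{n-1}$ can be applied to it, and one must make sure the finite support of $\{p_k\}$ is invoked to interchange the limit with the (finite) sum. There is no genuine analytic difficulty; the proof is short once the fixed-point reformulation from the lines preceding the statement is in hand.
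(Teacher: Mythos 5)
Your proof is correct and follows essentially the same route as the paper's: both pass to the limit in the recursion $\mu_n=\mu_1\star D_{\star}(\mu_{n-1})$, the paper by invoking continuity of convolution and $D_{\star}$ in the weak topology, and you by verifying that continuity explicitly via the test functions $g_k(x)=f(M^{-1}k+M^{-1}x)$. Your version is in fact slightly more careful, since it makes clear that the needed continuity rests on $\mu_1$ being a \emph{fixed, finitely supported} factor (general weak continuity of convolution of signed measures would require total variation bounds that are not assumed here).
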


\begin{proof}
Assume that the limit exists and $\mu = \lim_{n \rightarrow \infty} \mu_n$. Then, using that both convolution and $D_{\star}$ are continuous in the weak topology, we have 
\[\begin{split}
\mu =& \lim_{n \rightarrow \infty} \bigstar_{k=0}^{n} (D_{\star})^{k}(\mu_1)\\
		=& (\mu_1) \star  \lim_{n \rightarrow \infty} \bigstar_{k=1}^{n} (D_{\star})^{k}(\mu_1)\\
		=& (\mu_1) \star  \lim_{n \rightarrow \infty} (D_{\star})\lp \bigstar_{k=0}^{n-1} (D_{\star})^{k}(\mu_1) \rp\\
		=& (\mu_1) \star (D_{\star})\lp \lim_{n \rightarrow \infty}  \bigstar_{k=0}^{n-1} (D_{\star})^{k}(\mu_1) \rp\\
		=& (\mu_1) \star (D_{\star})(\mu),
\end{split}\]
so that $\mu$ satisfies the dilation equation.
\end{proof}

\subsection{The support of $\mu_n$ and the radix expansion}\label{Sect:Support}

By construction, $\mu_n$ is supported on $M^{-n} \Gamma$. In what follows, it will be convenient to define the functions $w_n(x)=\mu_n\lp \{x\}\rp$, so that $\mu_n = \sum_{x\in M^{-n}\Gamma} w_n(x)\delta_x$.

Let $\tilde{\Gamma}\subset\Gamma$ be the support of $\{p_k\}$ (which is finite by assumption). Then if 
\begin{equation}\label{Eqn:SuppOfMuN}
S_n=\lc \sum_{j=1}^{n} M^{-j}\tilde{\gamma}_j : \tilde{\gamma}_j\in\tilde{\Gamma} \text{ for all $j$}\rc ,
\end{equation}
from equation \eqref{Eqn:MuN}, we see that $\supp\lp \mu_n\rp \subset S_n$, so that the support of $\mu_n$ is finite. Further, let $|M|$ be $2$ if $M=2$ and $\sqrt{2}$ if $M=1+i$ (that is, $|M|$ is the Euclidean norm of $M$ as a complex number). Then for any $n$ and any $x\in S_n$, there is some sequence of $\tilde{\gamma}_j$ in $\tilde{\Gamma}$ such that
\[
 |x|  = \left|\sum_{j=1}^{n} M^{-j} \tilde{\gamma}_j\right| \leq \max_{\tilde{\gamma}\in\tilde{\Gamma}} |\tilde{\gamma}| \sum_{j=1}^{\infty} |M|^{-j} =  \frac{\max_{\tilde{\gamma}\in\tilde{\Gamma}} |\tilde{\gamma}|}{|M|-1} .
\]
Hence, the $S_n$ are uniformly bounded (as sets in Euclidean space). In particular, if we let $R$ be $\frac{\max_{\tilde{\gamma}\in\tilde{\Gamma}} |\tilde{\gamma}|}{|M|-1}$, then $R$ depends only on $M$ and $\tilde{\Gamma}$, and the ball of radius $R$ around the origin, which we denote by $B_R(0)$, contains the supports of all the $\mu_n$ and thus the support of their limit, assuming such a limit exists. Also, in terms of cardinality, by counting (dilated) lattice points in a fixed ball, we see that there is some $N$ (again, depending on $M$ and $\tilde{\Gamma}$) such that
\begin{equation}\label{Eqn:CardOfSupport}
\card(\supp(\mu_n)) \leq \card(S_n)\leq \card\lp B_R(0) \cap M^{-n}\Gamma \rp \leq N \cdot 2^n.
\end{equation}

Next, for $x\in S_n$, let $\Lambda_n(x)$ be the set of $n$-tuples $(\tilde{\gamma}_1,\ldots, \tilde{\gamma}_n)$ of points in $\tilde{\Gamma}$ such that $x= \sum_{j=1}^{n} M^{-j}\tilde{\gamma}_j $; note that $\Lambda_n(x)$ will be finite. Then we see that
\[
\mu_n\lp \{x\}\rp = w_n(x) =  \sum_{\substack{(\tilde{\gamma}_1,\ldots, \tilde{\gamma}_n)\in\Lambda_n(x)}} \lp\prod_{j=1}^n p_{\tilde{\gamma}_j}\rp .
\]
In particular, we will have that $\supp(\mu_n)=S_n$ except in the case when the right-hand side of the above vanishes for some $x\in S_n$.

This gives an appealing interpretation of the $\mu_n$. Namely, we know that when $\tilde{\Gamma}=\{0,1\}$ is the standard set of binary digits, we get every point of the lattice $M^{-n} \Gamma$ in the standard tile (either $[0,1]$ or the twin dragon $T$) uniquely as
\[
\lc \sum_{j=1}^{n} M^{-j}\gamma_j : \gamma_j\in\{0,1\} \text{ for all $j$}\rc .
\]
If $\tilde{\Gamma}=\{0,1\}$ and $p_0=p_1=1/2$, then in the limit $\mu_n$ converges weakly to Lebesgue measure on the standard tile; indeed, this is the standard construction of the uniform (with respect to Lebesgue measure) probability measure on the standard tile via binary expansion. Note that even in the limit as $n\rightarrow \infty$, all points, except for a set of (Lebesgue) measure zero, in the standard tile have a unique binary representation. In the general case, $\tilde{\Gamma}$ is a non-standard ``too large'' set of digits, which, when used in the binary expansion \eqref{Eqn:SuppOfMuN}, gives lattice points not necessarily confined to the standard unit tile and, in general, gives them with non-unique representations. Then the pseudo-probability of such a point is the sum over all such representations of the products of the associated pseudo-probabilities of each digit. Intuitively, we have a pseudo-probability generalization of the standard construction of a uniform probability measure by binary expansion.

\subsection{An existence condition and uniqueness}

\begin{THM}
Suppose $\|\mu_n\|_{\mathrm{TV}}$ is bounded independently of $n$. Then $\mu_n$ converges weakly to a pseudo-probability measure $\mu$.
\end{THM}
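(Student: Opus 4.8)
The plan is to establish weak convergence by hand — the L\'evy continuity theorem being unavailable for signed measures — by first showing that $\left(\int f\,d\mu_n\right)_n$ is a Cauchy sequence of reals for every Lipschitz test function $f$, and then upgrading to all bounded continuous $f$ and extracting the limit measure via Riesz representation.

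The starting point is a telescoping identity. Writing $\mu_0:=\delta_0$ so that $\mu_1=\mu_1\star D_{\star}\mu_0$, the recursion $\mu_{m}=\mu_1\star D_{\star}(\mu_{m-1})$ gives $\mu_{m+1}-\mu_m=\mu_1\star D_{\star}(\mu_m-\mu_{m-1})$; iterating, and using that $D_{\star}$ is multiplicative for convolution (since $D$ is linear) together with $\mu_n=\mu_{n-1}\star D_{\star}^{\,n-1}\mu_1$, one obtains
\[
\mu_{n+1}-\mu_n=\mu_n\star D_{\star}^{\,n}\bigl(\mu_1-\delta_0\bigr).
\]
Since $\sum_{k}p_k=1$ we have $D_{\star}^{\,n}(\mu_1-\delta_0)=\sum_{k\in\tilde{\Gamma}}p_k\bigl(\delta_{M^{-n-1}k}-\delta_0\bigr)$, so testing against $f$ gives
\[
\int f\,d(\mu_{n+1}-\mu_n)=\sum_{k\in\tilde{\Gamma}}p_k\int\bigl(f(a+M^{-n-1}k)-f(a)\bigr)\,d\mu_n(a).
\]

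The hypothesis now does the essential work. Since multiplication by $M^{-1}$ is a similarity with ratio $|M|^{-1}<1$ on both $\bR$ and $\bC$, if $f$ is Lipschitz with constant $L$ then $\bigl|f(a+M^{-n-1}k)-f(a)\bigr|\le L\,|M|^{-n-1}|k|$, whence
\[
\Bigl|\int f\,d(\mu_{n+1}-\mu_n)\Bigr|\le L\,|M|^{-n-1}\Bigl(\max_{\tilde{\gamma}\in\tilde{\Gamma}}|\tilde{\gamma}|\Bigr)\Bigl(\sum_k|p_k|\Bigr)\,\|\mu_n\|_{\mathrm{TV}}.
\]
As $\|\mu_n\|_{\mathrm{TV}}$ is bounded independently of $n$ and $|M|>1$, the right-hand side is summable in $n$, so $\sum_n\left(\int f\,d\mu_{n+1}-\int f\,d\mu_n\right)$ converges absolutely and $\left(\int f\,d\mu_n\right)_n$ is Cauchy. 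I expect this to be the heart of the matter: without the total-variation bound one could only iterate the cascade-type averaging naively, replacing $\|\mu_n\|_{\mathrm{TV}}$ by $\bigl(\sum_k|p_k|\bigr)^n$, which in general overwhelms the contraction factor $|M|^{-n}$ (this is why the elementary argument would otherwise need $\sum_k|p_k|<|M|$); the hypothesis supplies exactly the missing control, playing the role that tightness plays in the classical positive-measure theory.

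To finish, recall that all $\mu_n$ are supported in the fixed ball $\overline{B_R(0)}$. By density of Lipschitz functions in $C(\overline{B_R(0)})$ together with the uniform bound $\|\mu_n\|_{\mathrm{TV}}\le C$, the sequence $\int f\,d\mu_n$ converges for every $f\in C_b(\bR^n)$; the limit $f\mapsto L(f)$ is linear with $|L(f)|\le C\sup_{\overline{B_R(0)}}|f|$, so by the Riesz representation theorem it is integration against a finite signed measure $\mu$ supported in $\overline{B_R(0)}$. Then $\mu_n\to\mu$ weakly by construction, and testing against the constant function $1$ gives $\mu(\bR^n)=\lim_n\mu_n(\bR^n)=1$, so $\mu$ is a pseudo-probability measure. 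The remaining points — the telescoping algebra of convolutions and the density reduction to Lipschitz test functions — are routine once the common compact support and the uniform total-variation bound are in hand.
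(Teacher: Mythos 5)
Your argument is correct, and it rests on the same underlying mechanism as the paper's proof --- the identity $\mu_{n+k}=\mu_n\star D_{\star}^{\,n}(\mu_k)$ together with the fact that $D_{\star}^{\,n}$ crushes everything toward the origin while the total-variation bound keeps the error under control --- but the technical execution is genuinely different. The paper compares $\mu_n$ directly with $\mu_{n+k}$ for arbitrary $k$, tests against an arbitrary bounded continuous $f$ (uniformly continuous on the common compact support $\overline{B_R(0)}$), and absorbs the error into $\|\mu_n\|_{\TV}\,\|\mu_k\|_{\TV}\cdot\varepsilon/A^2$; no rate is obtained and no density argument is needed. You instead telescope one step at a time via $\mu_{n+1}-\mu_n=\mu_n\star D_{\star}^{\,n}(\mu_1-\delta_0)$, which for Lipschitz test functions yields an explicitly summable geometric bound $O(|M|^{-n})$, and then you upgrade to all bounded continuous $f$ by density in $C(\overline{B_R(0)})$ and extract the limit measure by Riesz representation. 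Your route costs an extra approximation step but buys a quantitative rate of convergence for Lipschitz $f$, and it makes explicit the final step --- identifying the limit functional with an actual signed measure and checking $\mu(\bR^m)=1$ --- which the paper's proof leaves implicit when it says it suffices to show $\int f\,d\mu_n$ converges. Your remark that the hypothesis replaces the naive bound $\bigl(\sum_k|p_k|\bigr)^n$ is also a fair diagnosis of why the total-variation assumption is the crux.
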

\begin{proof}
Let $f$ be a bounded, continuous function (on either $\bR$ or $\bR^2$); it suffices to show that $\int f\d\mu_n$ converges. In particular, we will show that $\int f\d\mu_n$ gives a Cauchy sequence. Because all the $\mu_n$ are supported on a fixed compact set, namely $\overline{B_R(0)}$, we may assume that $f$ is uniformly continuous. Let $A>0$ be such that $\|\mu_n\|_{\mathrm{TV}} <A$ for all $n$.

Choose $\eps>0$. By uniform continuity, there exists $\delta>0$ such that $|f(x)-f(y)|<\frac{\eps}{A^2}$ whenever $\|x-y\|<\delta$. Now choose $N$ such that $D_{\star}^n \lp \overline{B_R(0)}\rp\subset B_{\delta}(0)$ whenever $n\geq N$.

Choose $n>N$ and $k>0$. From \eqref{Eqn:MuN}, we have that $\mu_{n+k}=\mu_n \star D_{\star}^n \lp \mu_k\rp$. Then we can write
\[\begin{split}
\lab \int f\, d\mu_n -\int f\, d\mu_{n+k}\rab &= 
\lab \sum_{x\in\supp(\mu_n)} \lp w_n(x)f(x)- w_n(x)\sum_{y\in \supp(\mu_k)}w_k(y) f\lp x+ M^{-n}y\rp\rp\rab \\
& = \lab \sum_{x\in\supp(\mu_n)} w_n(x)\sum_{y\in \supp(\mu_k)}w_k(y)\lp f(x)- f\lp x+ M^{-n}y\rp\rp\rab ,
\end{split}\]
where, in the second line, we used that $\sum_{y\in \supp(\mu_k)}w_k(y) =1$, which follows from $\mu_k$ being a pseudo-probability measure. From here, using that $n>N$, we have
\[\begin{split}
\lab \int f\, d\mu_n -\int f\, d\mu_{n+k}\rab &\leq
\lp\sum_{x\in\supp(\mu_n)} \lab w_n(x)\rab \rp\lp\sum_{y\in \supp(\mu_k)}\lab w_k(y)\rab\rp \\
& \qquad\times \lp\sup_{\substack{x\in\supp(\mu_n)\\ y\in \supp(\mu_k)}} \lab f(x) - f\lp x+ M^{-n}y\rp\rab\rp \\
& \leq \|\mu_n\|_{\mathrm{TV}} \|\mu_k\|_{\mathrm{TV}} 
\sup_{\substack{x\in\supp(\mu_n)\\ z\in B_{\delta}(0)}} \lab f(x) - f\lp x+ z \rp\rab \\
& \leq A\cdot A \cdot \frac{\eps}{A^2} \\
&=\eps.
\end{split}\]
Since this holds whenever $n>N$ and $k>0$, we see that $\int f\,d\mu_n$ gives a Cauchy sequence, completing the proof.
\end{proof}

In particular, we see that if the total variation norm of the $\|\mu_n\|$ is uniformly bounded, then a solution to the dilation equation for measures exists. The corresponding uniqueness result is straightforward. First note that if $\mu$ solves the dilation equation, so does $c\mu$ for any $c\in\bR$. However, aside from this scaling, the solution is unique. Of course, this follows from the general fact that the solution is unique in the space of distributions, but for completeness, we quickly give the measure-theoretic proof.

\begin{THM}
Suppose the $\mu_n$ converge. Then $\mu = \lim _{n \rightarrow \infty} \mu_n$ is the unique solution, up to scaling by a constant, for the signed measure dilation equation.
\end{THM}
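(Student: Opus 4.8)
The plan is to show that any two pseudo-probability solutions of the dilation equation for measures agree up to a multiplicative constant, and that $\mu = \lim_n \mu_n$ is one such solution. Since the statement allows scaling by a constant, I would first normalize: given an arbitrary signed-measure solution $\nu$ of \eqref{Eqn:DEForMeas}, I note that the equation applied to $A = \bR^n$ gives $\nu(\bR^n) = \left(\sum_{k\in\Gamma} p_k\right)\nu(\bR^n) = \nu(\bR^n)$, which is vacuous; so instead I would argue that if $\nu$ has nonzero total mass $c := \nu(\bR^n)$, then $\nu/c$ is a pseudo-probability solution, and it suffices to prove uniqueness among pseudo-probability solutions. (If $\nu$ has zero total mass, a separate short argument shows $\nu = 0$, since then $\nu$ is a fixed point of the same averaging operation but with ``mass $0$'', and the iteration-and-compactness argument below forces it to vanish.)

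The key step is to observe that the recursion defining the $\mu_n$ can be started from \emph{any} pseudo-probability solution rather than from $\mu_1$. Precisely, if $\nu$ is a pseudo-probability solution, then $\nu = \mu_1 \star D_\star(\nu)$, and iterating this identity $n-1$ more times, using associativity and commutativity of convolution and the functoriality $D_\star(\alpha\star\beta) = D_\star(\alpha)\star D_\star(\beta)$, yields
\[
\nu \;=\; \left(\bigstar_{k=0}^{n-1} (D_\star)^k(\mu_1)\right) \star (D_\star)^n(\nu) \;=\; \mu_n \star (D_\star)^n(\nu).
\]
So $\nu$ differs from $\mu_n$ only by convolution with $(D_\star)^n(\nu)$, a pseudo-probability measure supported in $D^n\big(\supp(\nu)\big) \subseteq D_\star^n\big(\overline{B_R(0)}\big)$, a set shrinking to $\{0\}$ as $n\to\infty$ since $M$ is expanding. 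Testing against a bounded uniformly continuous $f$ and running exactly the $\eps$-$\delta$ estimate from the proof of the existence theorem — with $\|(D_\star)^n(\nu)\|_{\mathrm{TV}} = \|\nu\|_{\mathrm{TV}}$ and $\|\mu_n\|_{\mathrm{TV}}$ bounded (which holds under the hypothesis that the $\mu_n$ converge, since a weakly convergent sequence of measures supported in a fixed compact set is uniformly bounded in total variation by the uniform boundedness principle) — shows $\int f\,d\nu - \int f\,d\mu_n \to 0$. Hence $\nu = \lim_n \mu_n = \mu$.

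The main obstacle I anticipate is the total-variation control needed to justify the final estimate: weak convergence of the $\mu_n$ does not \emph{a priori} give a uniform bound on $\|\mu_n\|_{\mathrm{TV}}$ for signed measures in general, but it does here because all $\mu_n$ live on the fixed compact set $\overline{B_R(0)}$, so that $\mu_n \to \mu$ weakly against \emph{all} continuous functions (not merely bounded ones), and the Banach--Steinhaus theorem applied to the functionals $f \mapsto \int f\,d\mu_n$ on $C(\overline{B_R(0)})$ forces $\sup_n\|\mu_n\|_{\mathrm{TV}} < \infty$; I would spell this out carefully. A secondary point requiring care is the zero-mass case: I would handle it by noting that a zero-mass solution $\nu$ still satisfies $\nu = \nu \star$ (something of mass $1$) only after the normalization breaks down, so instead I argue directly that $\nu = \mu_1^{(0)} \star D_\star(\nu)$ forces, upon iteration, $\nu$ to equal a convolution whose first factor has total mass tending to $0$ while staying TV-bounded, so $\nu \equiv 0$. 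With these two wrinkles addressed, uniqueness up to scaling follows, and combined with the earlier lemma that $\lim_n \mu_n$ solves the equation, the theorem is proved.
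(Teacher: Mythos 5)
Your proposal is correct in outline and follows essentially the same route as the paper: iterate the dilation equation to obtain $\nu = \mu_n \star (D_\star)^n(\nu)$ and pass to the weak limit, concluding $\nu = \nu(\bR^m)\,\mu$. Two remarks. First, your normalization and separate zero-mass case are unnecessary: the single limit identity $\nu = \mu \star \nu(\bR^m)\delta_0 = \nu(\bR^m)\mu$ already covers every finite signed solution, including $\nu(\bR^m)=0$, which is how the paper phrases it. Second, and more substantively, you assert that $(D_\star)^n(\nu)$ is supported in $(D_\star)^n\lp\overline{B_R(0)}\rp$; but the containment $\supp(\nu)\subseteq\overline{B_R(0)}$ is not available a priori for an \emph{arbitrary} solution $\nu$ (the bound $R$ was derived for the approximants $\mu_n$, and a support argument alone does not rule out unbounded $\supp(\nu)$). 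This is repairable: all you actually need is that $(D_\star)^n\nu\rightarrow\nu(\bR^m)\delta_0$ weakly, which holds for \emph{any} finite signed measure by dominated convergence (test functions are bounded and $f(M^{-n}x)\rightarrow f(0)$ pointwise), or, if you prefer your $\eps$--$\delta$ estimate, by first splitting off a tail of small total variation. On the plus side, your Banach--Steinhaus argument for $\sup_n\|\mu_n\|_{\mathrm{TV}}<\infty$ is a genuine improvement in rigor: the theorem's hypothesis is only weak convergence, and a uniform total variation bound is indeed needed to interchange the limit with the convolution, a point the paper passes over silently.
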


\begin{proof}
Suppose $\nu$ is any (finite) signed measure; recall that by definition $\nu$ has bounded total variation.  Then it is easy to see that $D_{\star}^{n}\nu \rightarrow \nu(\bR^m)\delta_0$ weakly as $n \rightarrow \infty$ (here $m$ is either 1 or 2, as appropriate). Now, let $\tilde{\mu}$ be any solution to the dilation equation for measures. Then $\tilde{\mu}=\mu_1 \star D_{\star}(\tilde{\mu})$, and iterating this gives
\[
\tilde{\mu} =\lp \mu_0\star D_{\star}\mu_0 \star D_{\star}^{2}\mu_0  \dots \star D_{\star}^{n}\mu_0 \rp \star D_{\star}^{n+1}\tilde{\mu} .
\]
Letting $n\rightarrow \infty$ on the right, we use the definition of $\mu$ and the above to see that
\[
\tilde{\mu} = \mu\star  \tilde{\mu}(\bR^m)\delta_0 = \tilde{\mu}(\bR^m) \mu .
\]
Thus, $\mu$ and $\tilde{\mu}$ differ by a constant, which is what we wanted to show.
\end{proof}

At this point, we have shown that, under any additional assumptions on the coefficients $p_k$ that give that $\|\mu_n\|_{\mathrm{TV}}$ is uniformly bounded, the dilation equation for measures has a unique solution, and this solution is given as the weak limit of the $\mu_n$. We now consider two situations in which we have such a bound.

\subsection{Probability measures}\label{Sect:ProbConds}

If $p_k\geq 0$ for all $k$, then $\mu_1$ will be a probability measure (rather than just a pseudo-probability measure). This is the case considered by Belock and Dobric \cite{Dobric}. In this case, every $\mu_n$ will be a probability measure, so trivially we have $\|\mu_n\|_{\TV}=1$ for all $n$, giving a solution measure $\mu$ which is also a probability measure. However, $\mu$ will not have a density, in general.

With additional assumptions on the $p_i$, though, $\mu$ will be absolutely continuous. Note that the action of $M$ splits $\Gamma$ into two cosets. We call $M\Gamma=\GE$ the evens, because on $\bR$ when $\Gamma=\Zed$, it is exactly the even integers, and we call the other coset $\GO$ the odds, for the analogous reason. Then the condition for $\mu$ to be absolutely continuous is that $\sum_{k\in\GE}p_k=\sum_{k\in\GO}p_k=1/2$ (as originally proved in \cite{Dobric}). The significance of this condition is explained by the following result.

\begin{Lemma}\label{Lem:WProb}
Assume that $p_k\geq 0$ for all $k\in\Gamma$ and that $\sum_{k\in\GE}p_k=\sum_{k\in\GO}p_k=1/2$. Then for all $n$,
\[
0\leq w_n(x)\leq \frac{1}{2^n} \quad \text{for all $x\in M^{-n}\Gamma$.}
\]
\end{Lemma}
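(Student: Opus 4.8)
The plan is to argue by induction on $n$, using the recursion $\mu_n = \mu_1 \star D_\star(\mu_{n-1})$ together with the coset decomposition $\Gamma = \GE \sqcup \GO$ and the hypothesis $\sum_{k \in \GE} p_k = \sum_{k \in \GO} p_k = 1/2$. The base case $n=1$ is immediate: $\mu_1 = \sum_k p_k \delta_{M^{-1}k}$, so $w_1(M^{-1}k) = p_k \geq 0$, and since $p_0 \le \sum_{k\in\GE} p_k = 1/2$ (and similarly each $p_k \le 1/2$), we have $w_1(x) \le 1/2$ for all $x \in M^{-1}\Gamma$. Actually one should be slightly careful: several digits $k$ could map to the same point only if they are equal, so $w_1$ genuinely equals $p_k$ on each point, bounded by $1/2$.

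For the inductive step, I would write out the convolution explicitly in terms of the weight functions. From $\mu_n = \mu_1 \star D_\star(\mu_{n-1})$ we get
\[
w_n(x) = \sum_{k \in \Gamma} p_k \, w_{n-1}\bigl(M(x) - k\bigr),
\]
interpreting $w_{n-1}$ as zero off of $M^{-(n-1)}\Gamma$. The key observation is that for a fixed $x$, as $k$ ranges over $\Gamma$, the argument $Mx - k$ ranges over a coset of $\Gamma$ inside $M^{-(n-1)}\Gamma$ — more precisely, $Mx \in M^{-(n-2)}\Gamma$ and subtracting lattice elements $k$ keeps us in that same coset class modulo $\Gamma$. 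So the sum naturally splits according to whether $k \in \GE$ or $k \in \GO$: one piece picks up the values of $w_{n-1}$ at points congruent to $Mx$ modulo $M\Gamma$ (within the finer lattice), the other at points in the complementary class. Nonnegativity of $w_n$ is then clear since all $p_k \geq 0$ and all $w_{n-1} \geq 0$ by induction.

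For the upper bound, the idea is that the points $\{Mx - k : k \in \GE\}$ are distinct elements of $M^{-(n-1)}\Gamma$, and likewise for $k \in \GO$, so
\[
\sum_{k \in \GE} p_k \, w_{n-1}(Mx - k) \leq \Bigl(\max_{k\in\GE} w_{n-1}(Mx-k)\Bigr) \sum_{k\in\GE} p_k \le \frac{1}{2}\cdot\frac{1}{2^{n-1}} = \frac{1}{2^n},
\]
using the inductive bound $w_{n-1} \le 1/2^{n-1}$ and $\sum_{\GE} p_k = 1/2$; the same bound holds for the $\GO$ sum, and adding gives $w_n(x) \le 2 \cdot \frac{1}{2^n}$. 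This is off by a factor of $2$, so the real content is that one should bound each coset-sum more carefully: within each coset, the relevant points $Mx-k$ are distinct, so $\sum_{k\in\GE} p_k w_{n-1}(Mx-k) \le \frac{1}{2^n}$ only if we also knew $\sum$ of the $w_{n-1}$ values over that coset is controlled. The cleaner route is to observe $\sum_{k\in\GE} p_k w_{n-1}(Mx-k) \le \tfrac12 \max w_{n-1} \le \tfrac12 \cdot \tfrac1{2^{n-1}}$ is genuinely what we want for each of the two sums — wait, that gives $1/2^n$ per sum and $1/2^{n-1}$ total. The resolution: split instead so that only \emph{one} coset contributes nonzero values of $w_{n-1}$ at lattice-compatible points; i.e., for fixed $x$, exactly the $k$ in one coset of $\GE,\GO$ yield $Mx-k \in M^{-(n-1)}\Gamma$ while... no, all $k\in\Gamma$ yield valid points.

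The main obstacle, then, is precisely this factor-of-two bookkeeping: one must identify the right partition of the index set $\{k\in\Gamma\}$ so that the bound $w_n(x) \le 1/2^n$ comes out sharp. I expect the correct argument is: group the terms $p_k w_{n-1}(Mx-k)$ by the value $y = Mx - k \in M^{-(n-1)}\Gamma$; for each such $y$ there is a \emph{unique} $k = Mx - y \in \Gamma$, and this $k$ lies in $\GE$ or $\GO$ according to the residue of $y$ (equivalently of $Mx-y$) modulo $M\Gamma$; then
\[
w_n(x) = \sum_{y} p_{Mx-y}\, w_{n-1}(y) \le \Bigl(\max_y w_{n-1}(y)\Bigr)\sum_y p_{Mx-y},
\]
and the subtlety is that $\sum_y p_{Mx-y}$ over the relevant $y$ need not be $1$ — but it is at most $1$, and combined with $\max w_{n-1} \le 1/2^{n-1}$ this still only gives $1/2^{n-1}$. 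Therefore the genuinely necessary insight must be that for fixed $x$, the points $y = Mx-k$ with $k\in\GE$ all lie in a \emph{single} coset of $M^{-(n-1)}\Gamma$ modulo $M^{-(n-2)}\Gamma$ (the "even" sublattice at that scale), and likewise $k\in\GO$ lands in the other; so both the weight-sum $\sum_{k\in\GE}p_k = 1/2$ \emph{and} the fact that at scale $n-1$ the total mass on each such sub-coset may be uncontrolled forces one to instead iterate the bound $\max w_{n-1}\le 1/2^{n-1}$ against $\sum p_k = 1/2$ \emph{coset-by-coset}, yielding $w_n(x) \le \tfrac12\cdot\tfrac1{2^{n-1}} + (\text{something that vanishes})$. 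I would therefore carefully check whether for a fixed target point $x$, the contributions from $\GE$ and from $\GO$ cannot \emph{both} be nonzero — this would happen if the parity of $Mx - k$ modulo $M\Gamma$ is determined by $x$ alone (independent of which coset $k$ is in), which is plausible since $M$ maps $\Gamma$ into $\GE$. If so, only one coset-sum contributes, giving exactly $w_n(x) \le \tfrac12 \cdot \tfrac1{2^{n-1}} = \tfrac1{2^n}$, and the proof closes. Pinning down this parity statement is the crux.
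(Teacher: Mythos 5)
Your base case and the nonnegativity argument are fine, but the inductive step has a genuine gap, and the resolution you propose at the end is false for the decomposition you chose. Writing $\mu_n=\mu_1\star D_\star(\mu_{n-1})$ gives $w_n(x)=\sum_{k}p_k\,w_{n-1}(Mx-k)$, and here $Mx\in M^{-(n-1)}\Gamma$ while $k\in\Gamma\subset M^{-(n-1)}\Gamma$, so $Mx-k$ lies in the lattice supporting $\mu_{n-1}$ for \emph{every} $k$; there is no parity obstruction, and in general terms from both cosets are simultaneously nonzero. Concretely, for the D4 coefficients on $\bR$ one has $w_2(1/2)=p_0w_1(1)+p_1w_1(0)=p_0p_2+p_1p_0$, with the contributions from $k=0\in\GE$ and $k=1\in\GO$ both nonzero. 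So the dichotomy you hope to ``pin down'' does not hold in your setup, and your argument only yields $w_n(x)\le 1/2^{n-1}$.

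The fix is to peel off the factor at the \emph{finest} scale rather than the coarsest: by commutativity of convolution, $\mu_n=(D_\star)^{n-1}(\mu_1)\star\mu_{n-1}$, which gives
\[
w_n(x)=\sum_{k\in\Gamma}p_k\,w_{n-1}\lp x-M^{-n}k\rp ,
\]
with $w_{n-1}$ extended by zero off $M^{-(n-1)}\Gamma$. Now the support constraint does the work: $x-M^{-n}k\in M^{-(n-1)}\Gamma$ if and only if $k\equiv M^nx\pmod{M\Gamma}$, so for each fixed $x\in M^{-n}\Gamma$ exactly one coset contributes ($\GE$ if $x\in M^{-(n-1)}\Gamma$, $\GO$ if $x\in M^{-n}\Gamma\setminus M^{-(n-1)}\Gamma$), and the inductive hypothesis gives $w_n(x)\le \frac{1}{2^{n-1}}\sum_{k\in\GE}p_k=\frac{1}{2^n}$ (respectively with $\GO$). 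This is precisely the paper's argument; your write-up correctly diagnoses the factor-of-two problem but does not close it.
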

\begin{proof}
That the $\mu_n$ are probability measures implies the non-negativity of the $w_n$.
For the upper bound, we proceed by induction. When $n=1$, this is just the observation that $p_k\leq 1/2$ for all $k$, which is immediate from the assumptions on the $p_k$.

Now assume that the conclusion holds for $w_{n-1}$. We know that $\mu_{n-1}$ is supported on $M^{-(n-1)} \Gamma$ and that $\mu_n= (D_{\star})^{n-1}(\mu_1)\star \mu_{n-1}$. Thus, for any $x\in M^{-n}\Gamma$ (where we recall that $\supp(\mu_n)\subset M^{-n}\Gamma$), we have
\[
w_n(x) = \sum_{k\in\Gamma} p_k w_{n-1}\lp x-M^{-n}k\rp .
\]
where we extend $w_{n-1}$ to $M^{-n}\Gamma$ by letting it be zero off of $M^{-n+1}\Gamma$. Now $w_{n-1}\left( x-M^{-n}k\right) $ can only be non-zero if $x-M_{-n}k \in M^{-(n-1)}\Gamma$, and thus there are two cases depending on whether $x \in M^{-(n-1)}\Gamma$ or $x \in M^{-n}\Gamma \setminus M^{-(n-1)}\Gamma$. In the first case, only even $k$ contribute, and in the second, only odd $k$. Then we have
\[\begin{split}
w_n(x) &= \begin{cases} \sum_{k\in\GE} p_k w_{n-1}\lp x-M^{-n}k\rp & \text{if $x \in M^{-(n-1)}\Gamma$} \\
\sum_{k\in\GO} p_k w_{n-1}\lp x-M^{-n}k\rp & \text{if $x \in M^{-n}\Gamma \setminus M^{-(n-1)}\Gamma$} \end{cases} \\
&\leq \begin{cases} \frac{1}{2^{n-1}}\sum_{k\in\GE} p_k  & \text{if $x \in M^{-(n-1)}\Gamma$} \\
\frac{1}{2^{n-1}}\sum_{k\in\GO} p_k & \text{if $x \in M^{-n}\Gamma \setminus M^{-(n-1)}\Gamma$} \end{cases} \\
&= \frac{1}{2^{n-1}}\cdot \frac{1}{2}  \quad \text{for all $x\in M^{-n}\Gamma$,}
\end{split}\]
where we've used the inductive assumption and the conditions on the $p_k$. This completes the proof.
\end{proof}

From here, it's straightforward to show that $\mu$ will have a density under these conditions.

\begin{THM}
Assume that $p_k\geq 0$ for all $k\in\Gamma$ and that $\sum_{k\in\GE}p_k=\sum_{k\in\GO}p_k=1/2$. Then $\mu$ has a density $\phi$ (with respect to Lebesgue measure), and $\phi \in L^{\infty}$ with $||\phi||_{\infty} \leq 1$.
\end{THM}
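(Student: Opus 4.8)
The plan is to turn the pointwise bound $0\le w_n(x)\le 2^{-n}$ of Lemma \ref{Lem:WProb} into an $L^\infty$ bound by ``smearing'' each atom $w_n(x)$ of $\mu_n$ uniformly over the scale-$n$ tile whose corner is $x$, extracting a weak-$*$ limit of the resulting densities, and then identifying that limit with $\mu$. Recall that $|\det M|=2$ in both of our cases. For $n\ge 1$ let $\{Q^n_k\}_{k\in\Gamma}$ denote the translates $M^{-n}(k+T')$ of the fundamental tile ($T'=[0,1]$ or the twin dragon $T$); these are pairwise disjoint up to Lebesgue-null sets, each has $\vol(Q^n_k)=2^{-n}$, and since $0\in T'$ the point $M^{-n}k$ lies in $Q^n_k$. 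I would then set
\[
\phi_n=\sum_{k\in\Gamma} 2^n\, w_n(M^{-n}k)\,\Ind_{Q^n_k}.
\]
By Lemma \ref{Lem:WProb} we have $0\le \phi_n\le 1$ a.e., and $\int \phi_n\d x=\sum_k w_n(M^{-n}k)=\mu_n(\bR^m)=1$. Moreover, since $\supp(\mu_n)\subset \overline{B_R(0)}$, every $\phi_n$ is supported in the fixed compact set $K:=\overline{B_{R+1}(0)}$ once $n$ is large enough that $|M|^{-n}\diam(T')\le 1$.

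Next I would check that $\phi_n\d x$ is asymptotically indistinguishable from $\mu_n$ against bounded continuous test functions. Writing $f_k=2^n\int_{Q^n_k}f\d x$ for the average of $f$ over $Q^n_k$, we have
\[
\int f\,\phi_n\d x-\int f\d\mu_n=\sum_{k}w_n(M^{-n}k)\bigl(f_k-f(M^{-n}k)\bigr).
\]
Since $M^{-n}k\in Q^n_k$ and $\diam(Q^n_k)\le |M|^{-n}\diam(T')\to 0$, uniform continuity of $f$ on a compact neighbourhood of $K$ forces $\sup_k|f_k-f(M^{-n}k)|\to 0$; as $\sum_k|w_n(M^{-n}k)|=\|\mu_n\|_{\TV}=1$, the whole difference tends to $0$.

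Finally I would invoke weak-$*$ compactness. For $n$ large the $\phi_n$ lie in the unit ball of $L^\infty(K)=\bigl(L^1(K)\bigr)^*$, so by Banach--Alaoglu together with the separability of $L^1(K)$ there is a subsequence with $\phi_{n_j}\to\phi$ weak-$*$ in $L^\infty$, for some $\phi$ supported in $K$ with $\|\phi\|_\infty\le 1$ (and $\phi\ge 0$ a.e., being a weak-$*$ limit of non-negative functions). For any bounded continuous $f$ the restriction $f|_K$ lies in $L^1(K)$, so $\int f\,\phi_{n_j}\d x\to\int f\,\phi\d x$; combining this with the previous paragraph and with $\mu_n\to\mu$ weakly (which holds since $\|\mu_n\|_{\TV}=1$ is bounded), we obtain $\int f\d\mu=\int f\,\phi\d x$ for every bounded continuous $f$, hence $\mu=\phi\d x$. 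Thus $\mu$ is absolutely continuous with density $\phi\in L^\infty$ and $\|\phi\|_\infty\le 1$.

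The only points requiring care are the bookkeeping that confines every $\phi_n$ to one fixed compact set and the uniform-continuity estimate controlling the smearing error; the only structural subtlety is that, because one extracts a subsequence rather than the whole sequence, the limit $\phi\d x$ is identified with $\mu$ through the weak convergence $\mu_n\to\mu$ (equivalently, uniqueness of the solution measure) rather than by showing $\phi_n$ itself converges. I do not expect a substantive obstacle beyond this.
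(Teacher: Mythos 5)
Your argument is correct, and it reaches the conclusion by a genuinely different mechanism than the paper's. The paper never constructs approximating densities: it tests $\mu$ directly against a smooth compactly supported $f$, uses Lemma \ref{Lem:WProb} to bound $\lab\int f\,d\mu_n\rab$ by $2^{-n}\sum_{x\in S_n}|f(x)|$, observes that $2^{-n}\sum_{k}\delta_{M^{-n}k}$ converges weakly to Lebesgue measure so that this bound tends to $\|f\|_1$, and then invokes the duality $(L^1)^*=L^\infty$ to conclude that the bounded functional $f\mapsto\int f\,d\mu$ is given by an $L^\infty$ density of norm at most $1$. You instead smear each atom over its scale-$n$ tile to get explicit step functions $\phi_n$ with $0\le\phi_n\le 1$, show $\phi_n\,dx$ and $\mu_n$ are asymptotically indistinguishable against continuous test functions, and extract a weak-$*$ limit via Banach--Alaoglu. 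Both routes rest on the same two inputs (the pointwise bound $w_n\le 2^{-n}$ and the fact that the scale-$n$ lattice/tiles have mass $2^{-n}$ per cell), and both ultimately live in the $(L^1,L^\infty)$ duality; the paper's version is shorter because it skips the construction of $\phi_n$ and the bookkeeping about supports and diameters. What your version buys is that it is constructive: your $\phi_n$ are, up to null sets, exactly the $\Sigma_n$-conditional densities that the paper introduces in Section \ref{Sect:Prelim} and computes in Section \ref{Sect:OneDComp} (the step functions of Figure \ref{fig:Refining}), so your proof simultaneously shows that those cascade-algorithm approximants converge weak-$*$ in $L^\infty$ to the density $\phi$, a fact the paper does not state. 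Your closing remark about the subsequence is handled correctly: since the full sequence $\mu_n$ converges weakly to $\mu$ and $\phi_n\,dx-\mu_n\to 0$, any subsequential weak-$*$ limit integrates against bounded continuous functions exactly as $\mu$ does, which identifies it; a single subsequence therefore suffices to produce the density.
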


\begin{proof}
We begin by letting $f$ be a continuous smooth function with compact support. Then, by definition of $\mu$ and $\mu_n$, we have 
\begin{equation*}
\int f d \mu =\lim _{n \rightarrow \infty} \int f d \mu _n
 =\lim _{n \rightarrow \infty} \sum_{x \in S_n} f(x) w_n(x).
\end{equation*}
Using Lemma \ref{Lem:WProb} gives
\[
\int f d \mu  \leq \lim _{n \rightarrow \infty} \frac{1}{2^n} \sum_{x \in S_n} |f(x)| .
\]
Note that $2^{-n}\sum_{k\in\Gamma} \delta_{M^{-n}k}$ converges weakly to Lebesgue measure as $n\rightarrow\infty$. So we have
\begin{equation*}
\int f d \mu \leq \int |f(x)| dx
= ||f||_1.
\end{equation*}
Therefore integration of a smooth function $f$ against $\mu$ is bounded by $||f||_1$.  By the density of such functions in $L^1$, we see that integration against $\mu$ determines a bounded linear functional on $L^1$. By duality, we conclude that $\mu=\phi\, dx$ with $\phi \in L^{\infty}$ and $||\phi||_{\infty} \leq 1$.
\end{proof}

\subsection{Orthogonality conditions and scaling functions}\label{Sect:OrthoConds}

For the remainder of this section, we assume (instead of the probability conditions above) that the $p_k$ satisfy the orthonormality conditions
\begin{equation}\label{Eqn:OrthoConds}
\sum_{k\in\Gamma} p_k p_{k+Mi} = \frac{1}{2}\delta_{0i} \quad\text{for all $i\in\Gamma$} .
\end{equation}
(In the wavelet literature, a coefficient sequence satisfying these conditions is called a conjugate quadrature filter.) It is known that these conditions are necessary for a function satisfying equation \eqref{dile} to generate an orthonormal basis for the associated MRA. They are also known to be almost sufficient (we refer to \cite{Lawton97} for the case when $p_k$ has finite support). Thus, it is especially interesting to consider the dilation equation under these conditions. We begin by showing that we have existence of a solution (which is a classical result), and moreover, that it is given as the limit of the $\mu_n$. The key estimate is the following.

\begin{Lemma}\label{Lem:WSquared}
For all $n$, we have that $\sum_{x\in M^{-n}\Gamma} \lp w_{n}(x) \rp^2= \frac{1}{2^{n}}$.
\end{Lemma}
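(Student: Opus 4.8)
The plan is to prove the identity by induction on $n$, mirroring the classical fact that the cascade iteration preserves the $L^2$-norm up to the dilation factor, but phrased entirely in terms of the atoms $w_n$.

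For the base case $n=1$, recall that $\mu_1=\sum_{k\in\Gamma}p_k\delta_{M^{-1}k}$; since multiplication by $M$ is injective on $\Gamma$, distinct $k$ produce distinct atoms, so $w_1(M^{-1}k)=p_k$ and $w_1$ vanishes off $M^{-1}\Gamma$. Hence $\sum_{x}(w_1(x))^2=\sum_{k\in\Gamma}p_k^2$, which equals $\frac12$ by taking $i=0$ in the orthonormality conditions \eqref{Eqn:OrthoConds}.

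For the inductive step I would start from the recursion $\mu_n=(D_{\star})^{n-1}(\mu_1)\star\mu_{n-1}$ used in the proof of Lemma~\ref{Lem:WProb}, which gives
\[
w_n(x)=\sum_{k\in\Gamma}p_k\,w_{n-1}\lp x-M^{-n}k\rp,
\]
where $w_{n-1}$ is extended by zero off $M^{-(n-1)}\Gamma$. Squaring and summing over $x\in M^{-n}\Gamma$ (all sums being finite) yields
\[
\sum_{x}(w_n(x))^2=\sum_{k,j\in\Gamma}p_kp_j\sum_{x}w_{n-1}\lp x-M^{-n}k\rp w_{n-1}\lp x-M^{-n}j\rp .
\]
A summand is nonzero only when both arguments lie in $M^{-(n-1)}\Gamma$, which forces $M^{-n}(j-k)\in M^{-(n-1)}\Gamma$, i.e.\ $j=k+Mi$ for some $i\in\Gamma$. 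Substituting $y=x-M^{-n}k$ (a bijection of $M^{-n}\Gamma$), the inner sum becomes the autocorrelation $\sum_{y}w_{n-1}(y)w_{n-1}\lp y-M^{-(n-1)}i\rp$, which is independent of $k$. Reindexing the double sum by $k$ and $i$ and factoring gives
\[
\sum_{x}(w_n(x))^2=\sum_{i\in\Gamma}\lp\sum_{k\in\Gamma}p_kp_{k+Mi}\rp\sum_{y}w_{n-1}(y)w_{n-1}\lp y-M^{-(n-1)}i\rp=\frac12\sum_{y}(w_{n-1}(y))^2 ,
\]
since \eqref{Eqn:OrthoConds} annihilates every term except $i=0$. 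The inductive hypothesis then gives $\sum_x(w_n(x))^2=\frac12\cdot\frac{1}{2^{n-1}}=\frac{1}{2^n}$.

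The computation is mostly bookkeeping, and the step I would flag as the crux is the passage from the double sum over $(k,j)$ to a sum over $(k,i)$ with $j=k+Mi$: one must check that the condition $j-k\in M\Gamma$ is exactly what makes the cross terms survive and is also the correct index for invoking \eqref{Eqn:OrthoConds}, and that the reindexed shift $M^{-n}(j-k)$ equals $M^{-(n-1)}i$, so that the autocorrelation of $w_{n-1}$ is evaluated at a point of $M^{-(n-1)}\Gamma$ — the scale natural to $w_{n-1}$ — with the $i=0$ term reproducing $\sum_y(w_{n-1}(y))^2$. Everything else (finiteness of the sums, the bijection $y=x-M^{-n}k$, the nesting $M^{-(n-1)}\Gamma\subset M^{-n}\Gamma$) is routine.
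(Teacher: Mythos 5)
Your proposal is correct and follows essentially the same route as the paper's proof: induction on $n$, the convolution recursion $w_n(x)=\sum_k p_k w_{n-1}(x-M^{-n}k)$, squaring, using the support of $w_{n-1}$ on $M^{-(n-1)}\Gamma$ to restrict the cross terms to index differences in $M\Gamma$, and then invoking \eqref{Eqn:OrthoConds} to kill all but the $i=0$ autocorrelation term. The only cosmetic difference is your sign convention $j=k+Mi$ versus the paper's $\ell=k-Mj$, which changes nothing.
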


\begin{proof}
We proceed by induction. The base case, for $w_1$, is simply equation \eqref{Eqn:OrthoConds} with $i=0$, by the definition of $\mu_1$. So we assume that the lemma is true for $w_{n-1}$.

We know that $\mu_{n-1}$ is supported on $M^{-(n-1)} \Gamma$ and that $\mu_n= (D_{\star})^{n-1}(\mu_1)\star \mu_{n-1}$. Thus, for any $x\in M^{-n}\Gamma$ (where we recall that $\supp(\mu_n)\subset M^{-n}\Gamma$), we have
\[
w_n(x) = \sum_{k\in\Gamma} p_k w_{n-1}\lp x-M^{-n}k\rp .
\]
where we extend $w_{n-1}$ to $M^{-n}\Gamma$ by letting it be zero off of $M^{-n+1}\Gamma$. 
Squaring this gives
\[
w_n(x)^2 = \sum_{k\in\Gamma}\sum_{\ell\in\Gamma} p_kp_{\ell} w_{n-1}\lp x-M^{-n}k\rp w_{n-1}\lp x-M^{-n}\ell\rp .
\]
We wish to reindex the sum in terms of the difference between $k$ and $\ell$. However, we also note that, because $w_{n-1}$ is supported on $M^{-(n-1)} \Gamma$, each term in this sum can only be non-zero if $k-\ell\in M\Gamma$. Thus we have
\[
w_n(x)^2 = \sum_{k\in\Gamma}\sum_{j\in\Gamma} p_kp_{k-Mj} w_{n-1}\lp x-M^{-n}k\rp w_{n-1}\lp x-M^{-n}k+M^{-n+1}j\rp .
\]
Summing over $x$, we note that there are only finitely many non-zero terms in any of these sums, and thus we are free to re-order them, so that
\[\begin{split}
\sum_{x\in M^{-n}\Gamma} w_n(x)^2 & = \sum_{k\in\Gamma}\sum_{j\in\Gamma} p_kp_{k-Mj} 
\sum_{x\in M^{-n}\Gamma} w_{n-1}\lp x-M^{-n}k\rp w_{n-1}\lp x-M^{-n}k+M^{-n+1}j\rp \\
& = \sum_{j\in\Gamma} \lp \lb \sum_{k\in\Gamma} p_kp_{k-Mj}\rb
\lb \sum_{x\in M^{-n}\Gamma} w_{n-1}\lp x\rp w_{n-1}\lp x+M^{-n+1}j\rp \rb \rp
\end{split}\]
where we've used the fact that the inner sum in the first line depends on $j$ but not on $k$. Finally, the orthonormality conditions \eqref{Eqn:OrthoConds} show that the sum over $k$ in the last line is $\frac{1}{2}\delta_{0j}$, so that, by the inductive hypothesis
\[
\sum_{x\in M^{-n}\Gamma} w_n(x)^2 = \frac{1}{2} \sum_{x\in M^{-n}\Gamma} w_{n-1}\lp x\rp^2 = \frac{1}{2}\cdot\frac{1}{2^{n-1}} .
\]
\end{proof}

\begin{Cor}\label{Cor:TVForOrtho} 
The total variation norms of the $\mu_n$ are uniformly bounded; in particular, if $N$ is the constant from equation \eqref{Eqn:CardOfSupport}, then $\|\mu_n\|_{\mathrm{TV}} \leq \sqrt{N}$ for all $n$. Thus $\mu_n\rightarrow \mu$.
\end{Cor}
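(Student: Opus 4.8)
The plan is to combine the $L^2$ bound on the weights from Lemma \ref{Lem:WSquared} with the cardinality bound on the support from \eqref{Eqn:CardOfSupport} via Cauchy--Schwarz. Recall that $\mu_n$ is the discrete measure $\sum_{x\in M^{-n}\Gamma} w_n(x)\delta_x$, so that, by definition of the total variation norm of a discrete signed measure,
\[
\|\mu_n\|_{\mathrm{TV}} = \sum_{x\in\supp(\mu_n)} |w_n(x)| .
\]
First I would apply the Cauchy--Schwarz inequality to the sum on the right, viewing it as the inner product of the vector $(|w_n(x)|)_{x\in\supp(\mu_n)}$ with the all-ones vector indexed by $\supp(\mu_n)$; this gives
\[
\sum_{x\in\supp(\mu_n)} |w_n(x)| \leq \sqrt{\card(\supp(\mu_n))} \cdot \sqrt{\sum_{x\in\supp(\mu_n)} w_n(x)^2} .
\]
Then I would bound the first factor by $\sqrt{N\cdot 2^n}$ using \eqref{Eqn:CardOfSupport}, and the second factor by $\sqrt{1/2^n}$ using Lemma \ref{Lem:WSquared} (noting that extending the sum to all of $M^{-n}\Gamma$ adds only zero terms). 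Multiplying, the factors of $2^n$ cancel and we obtain $\|\mu_n\|_{\mathrm{TV}} \leq \sqrt{N}$, uniformly in $n$.

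Finally, since $N$ depends only on $M$ and $\tilde\Gamma$ and not on $n$, the hypothesis of the earlier existence theorem (the one asserting that a uniform bound on $\|\mu_n\|_{\mathrm{TV}}$ forces weak convergence of $\mu_n$ to a pseudo-probability measure $\mu$) is satisfied, so $\mu_n\rightarrow\mu$ weakly, which is the second assertion. I do not anticipate any real obstacle here: the only point requiring a word of care is the bookkeeping that the support of $\mu_n$ is genuinely finite (already established in Section \ref{Sect:Support}) so that Cauchy--Schwarz applies to a finite sum, and that one is free to pass between summing over $\supp(\mu_n)$ and over all of $M^{-n}\Gamma$ for the $L^2$ quantity.
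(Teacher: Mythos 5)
Your proposal is correct and follows essentially the same argument as the paper: Cauchy--Schwarz applied to $\sum|w_n(x)|$ over the finite support, bounding the cardinality factor by $\sqrt{N\cdot 2^n}$ via \eqref{Eqn:CardOfSupport} and the $\ell^2$ factor by $\sqrt{2^{-n}}$ via Lemma \ref{Lem:WSquared}, then invoking the earlier existence theorem. The only cosmetic difference is that the paper sums over $S_n$ rather than $\supp(\mu_n)$, which changes nothing.
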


\begin{proof}
Recall that $\mu_n$ is discrete. Then using the Cauchy-Schwarz inequality, equation \eqref{Eqn:CardOfSupport}, and Lemma \ref{Lem:WSquared}, we have
\[
\|\mu_n\|_{\mathrm{TV}} = \sum_{x\in S_n} |w_n(x)| 
 \leq \sqrt{\sum_{x\in M^{-n}\Gamma} \lp w_{n}(x) \rp^2} \sqrt{\card(S_n)} 
 \leq \sqrt{\frac{1}{2^{n}}} \sqrt{N \cdot 2^n} 
 = \sqrt{N} .
\]
\end{proof}

Under the orthogonality conditions, the solution measure $\mu$ always has a density, and further, this density is in $L^2$ (here, and in what follows, $L^q$ is defined relative to Lebesgue measure on either $\bR$ or $\bR^2$, as appropriate). Again, this is a classical result, but we give a proof in line with the techniques introduced above.

\begin{THM}\label{THM:L2Bound}
Under the orthogonality conditions, $\mu$ has a density $\phi$ (with respect to Lebesgue measure), and $\phi \in L^2$ with $||\phi||_2 \leq 1$.
\end{THM}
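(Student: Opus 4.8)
My plan is to follow the template of the $L^\infty$ bound proved just above, with Lemma~\ref{Lem:WSquared} (the $\ell^2$ estimate on $w_n$) replacing the $\ell^\infty$ estimate of Lemma~\ref{Lem:WProb}. I would fix a smooth, compactly supported test function $f$ and, using that $\int f\,d\mu=\lim_n\sum_{x\in M^{-n}\Gamma}f(x)w_n(x)$ (a finite sum for each $n$, since $f$ has compact support), apply the Cauchy--Schwarz inequality in the form
\[
\Bigl|\sum_{x\in M^{-n}\Gamma}f(x)w_n(x)\Bigr|\;\leq\;\Bigl(\sum_{x\in M^{-n}\Gamma}f(x)^2\Bigr)^{1/2}\Bigl(\sum_{x\in M^{-n}\Gamma}w_n(x)^2\Bigr)^{1/2}.
\]
By Lemma~\ref{Lem:WSquared} the last factor is exactly $2^{-n/2}$. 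For the first factor I would invoke the weak convergence of $2^{-n}\sum_{k\in\Gamma}\delta_{M^{-n}k}$ to Lebesgue measure (already used in the proof of the $L^\infty$ bound), applied to the continuous compactly supported function $f^2$, to get $\sum_{x\in M^{-n}\Gamma}f(x)^2=2^n\bigl(\|f\|_2^2+o(1)\bigr)$. Multiplying, the right-hand side above is $\bigl(\|f\|_2^2+o(1)\bigr)^{1/2}$, so letting $n\to\infty$ yields the key bound $\bigl|\int f\,d\mu\bigr|\leq\|f\|_2$.

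From here I would argue exactly as in the $L^\infty$ case: since smooth compactly supported functions are dense in $L^2(\bR)$ (resp.\ $L^2(\bR^2)$), the functional $f\mapsto\int f\,d\mu$ extends to a bounded linear functional on $L^2$, and self-duality of $L^2$ produces $\phi\in L^2$ with $\|\phi\|_2\leq 1$ and $\int f\,d\mu=\int f\phi\,dx$ for all such $f$. This identity forces $\mu=\phi\,dx$: both sides are finite signed measures agreeing against every continuous compactly supported test function (note $\phi$ vanishes a.e.\ off $\overline{B_R(0)}$, since $\mu$ does, so $\phi\,dx$ is indeed finite), hence they coincide. That is the assertion.

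I do not expect a genuine obstacle here --- Lemma~\ref{Lem:WSquared} carries the real content, and it is already in hand. The one place to be careful is the Riemann-type limit $\sum_{x\in M^{-n}\Gamma}f(x)^2=2^n(\|f\|_2^2+o(1))$: this is precisely the weak convergence of the normalized lattice masses to Lebesgue measure, and it is where the compact support of $f$ enters. If one prefers a more conceptual argument, an alternative is to introduce piecewise-constant approximating densities $\phi_n$, equal to $|\det M|^n w_n(x)=2^n w_n(x)$ on the scale-$n$ tile attached to $x\in M^{-n}\Gamma$; then $\|\phi_n\|_2=1$ by Lemma~\ref{Lem:WSquared}, so a subsequence converges weakly in $L^2$ to some $\phi$ with $\|\phi\|_2\leq 1$, and the estimate
\[
\Bigl|\int f\phi_n\,dx-\int f\,d\mu_n\Bigr|\;\leq\;\|\mu_n\|_{\mathrm{TV}}\,\omega_f(\rho_n),
\]
with $\omega_f$ the modulus of continuity of $f$ and $\rho_n\to 0$ the diameter of the scale-$n$ tiles, together with Corollary~\ref{Cor:TVForOrtho}, identifies the weak limit $\phi$ as the density of $\mu$. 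Either route gives $\mu=\phi\,dx$ with $\phi\in L^2$ and $\|\phi\|_2\leq 1$.
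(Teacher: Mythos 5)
Your proposal is correct and follows essentially the same route as the paper: Cauchy--Schwarz on the discrete sums, Lemma~\ref{Lem:WSquared} for the factor $2^{-n/2}$, weak convergence of $2^{-n}\sum_{k\in\Gamma}\delta_{M^{-n}k}$ to Lebesgue measure for the factor involving $f^2$, and then $L^2$ self-duality. The only differences are cosmetic --- you are a bit more explicit about identifying the resulting functional with the measure $\mu$, and your alternative argument via piecewise-constant densities is not in the paper but is not needed.
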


\begin{proof}
We begin by letting $f$ be a continuous smooth function with compact support. Then, by definition of $\mu$ and $\mu_n$, we have 
\begin{equation*}
\int f d \mu =\lim _{n \rightarrow \infty} \int f d \mu _n
 =\lim _{n \rightarrow \infty} \sum_{x \in S_n} f(x) w_n(x).
\end{equation*}

Now we can apply the Cauchy-Schwarz inequality to the sum on the right-hand side, followed by Lemma 4. So we have,
\begin{equation*}
\begin{split}
\int f d \mu  &\leq \lim _{n \rightarrow \infty} \sqrt{ \sum_{x \in S_n}w_n^2(x) \sum_{x \in S_n} f^2(x)}\\
\quad \quad &=\lim _{n \rightarrow \infty} \sqrt{ \frac{1}{2^{n}} \sum_{x \in S_n} f^2(x)} .
\end{split}
\end{equation*}
Since $2^{-n}\sum_{k\in\Gamma} \delta_{M^{-n}k}$ converges weakly to Lebesgue measure as $n\rightarrow\infty$, we have
\begin{equation*}
\int f d \mu \leq \sqrt{\int f^2(x) dx}
= \|f\|_2.
\end{equation*}
Therefore integration of a smooth function $f$ against $\mu$ is bounded by $\|f\|_2$.  By the density of such functions in $L^2$, we see that integration against $\mu$ determines a bounded linear functional on $L^2$. Since $L^2$ is a Hilbert space, we conclude that $\mu=\phi\, dx$ with $\phi \in L^2$ and $\|f\|_2\leq 1$.
\end{proof}

\begin{rem}
The total variation bound in Corollary \ref{Cor:TVForOrtho} implies the density $\phi$ is in $L^1$ with $\|\phi\|_1\leq \sqrt{N}$. However, the $L^2$ bound of Theorem \ref{THM:L2Bound} and the fact that  $\phi$ has compact support give, via H\"older's inequality, that
\[
\|\phi\|_1\leq \sqrt{\mathrm{Leb}(\supp(\phi))} ,
\]
where $\mathrm{Leb}$ denotes Lebesgue measure. Further, this is at least as good as the bound by $\sqrt{N}$, given the definition of $N$ and the fact, already noted, that $2^{-n}\sum_{k\in\Gamma} \delta_{M^{-n}k}$ converges weakly to Lebesgue measure.
\end{rem}

\begin{rem}
In light of the proofs of Lemmas \ref{Lem:WProb} and \ref{Lem:WSquared}, the conditions on the $p_k$ and the consequences for $\mu$ are not surprising. Indeed, the proof of Lemma \ref{Lem:WSquared} is based on an $L^2$-type bound on the $w_n$ which persists in the limit. Given how the $w_n$ act on smooth functions, $\mu$ should then correspond to an element of the dual space $(L^2)^*$, which, of course, is $L^2$. On the other hand, assuming that the $p_i$ give a probability measure is essentially an $L^1$ bound on the $w_n$, which means that $\mu$ should give an element of $(L^{\infty})^*$. And this is consistent with $\mu$ being a probability measure, but there is no reason to assume that $\mu$ will have an $L^1$-density, since $(L^{\infty})^*$ is strictly larger than $L^1$. However, the stronger assumption that the even and odd coefficients each sum to $1/2$ gives an $L^{\infty}$ bound on the $w_n$. Then we expect that $\mu$ will give an element of $(L^1)^*$, meaning that $\mu$ should have an $L^{\infty}$ density, which indeed is what we saw. While this might suggest considering other $L^p$-type bounds on the $w_n$, there do not appear to be any obvious, non-trivial conditions on the $p_k$ that lead to such bounds. 
\end{rem}

\section{Example of computing a scaling function on $\bR$}\label{Sect:OneDComp}

We consider the example of the scaling function corresponding to Daubechies' D4 wavelet. Here we work on $\bR$, and the dilation equation has pseudo-probabilities $p_0=\frac{1+\sqrt{3}}{8}$, $p_1=\frac{3+\sqrt{3}}{8}$, $p_2=\frac{3-\sqrt{3}}{8}$, and $p_3=\frac{1-\sqrt{3}}{8}$. From equation \eqref{Eqn:SuppOfMuN}, we easily see that $\supp(\mu) \subset [0, 3]$, so we apply the dilation equation to the intervals of length 1: $[0, 1]$, $[1, 2]$, and $[2, 3]$. By doing this, we obtain the following system of linear equations: 

\begin{align*}
\mu([0,1]) &=  p_0\mu([0,2]-0) + p_1\mu([0,2]-1) + p_2\mu([0,2]-2) + p_3\mu([0,2]-3)\\
	\quad		 &=  p_0\mu([0,1]) + p_0\mu([1,2]) + p_1\mu([-1,0]) + p_1\mu([0,1]) + p_2\mu([-2,-1])\\
	\quad 	 &\qquad + p_2\mu([-1,0]) + p_3\mu([-3,-2]) + p_3\mu([-2,-1])\\
	\quad		 &=  (p_0+p_1)\mu([0,1]) + p_0\mu([1,2]),\\
\mu([1, 2]) 		 &=  (p_2+p_3)\mu([0,1]) + (p_1+p_2)\mu([1,2]) + (p_0+p_1)\mu([2,3]), \quad\text{and}\\
\mu([2,3])  &=  p_3\mu([1,2]) + (p_2 + p_3)\mu([2, 3]).\\
\end{align*}

This system can be represented by the matrix

\[
 A= \left( \begin{array}{ccc}
p_0+p_1 & p_0		  & 0 \\
p_2+p_3 & p_1+p_2 & p_0+p_1 \\
0				& p_3		  & p_2+p_3 \end{array} \right)  
= \left( \begin{array}{ccc}
\frac{4+2\sqrt{3}}{8} & \frac{1+\sqrt{3}}{8}	  & 0 \\
\frac{4-2\sqrt{3}}{8} & \frac{3}{4} & \frac{4+2\sqrt{3}}{8}\\
0				& \frac{1-\sqrt{3}}{8}		  & \frac{4-2\sqrt{3}}{8} \end{array} \right).\\
\] 

We find that $A$ has eigenvalue $1$, and the corresponding right eigenspace is one-dimensional and is spanned by the vector
\[ V= \left(
\begin{array}{c}
\frac{-1-\sqrt{3}}{2-3\sqrt{3}}\\
\frac{2}{5+4\sqrt{3}}\\
\frac{\sqrt{3}-1}{-17-9\sqrt{3}}\\
\end{array}
\right).\\
\]
Here we have normalized $V$ so that its components sum to 1, which is equivalent to letting $\mu(\bR)=\mu([0,3])=1$. This tells us, specifically, that $\mu(0, 1)=\frac{-1-\sqrt{3}}{2-3\sqrt{3}}$, $\mu(1, 2)=\frac{2}{5+4\sqrt{3}}$, and $\mu(2, 3)=\frac{\sqrt{3}-1}{-17-9\sqrt{3}}$. In light of the fact that $\mu$ is absolutely continuous, this determines $\mu$ on $\Sigma_0$ (as already discussed); in more pseudo-probabilistic language, this determines the expectation of (the density of) $\mu$ conditioned on $\Sigma_0$. 

From here, we can recursively use the signed measure dilation equation to find the measures of the intervals of length $\frac{1}{2}$ (specifically $\lb0, \frac{1}{2}\rb$, $\lb \frac{1}{2}, 1 \rb$, $\lb 1, \frac{3}{2} \rb$, etc.), then $\frac{1}{4}$, and so on, which is equivalent to finding $\mu$ restricted to $\Sigma_1$, then $\Sigma_2$, and so on.  Applying the dilation equation to the intervals of length $1/2$ gives us (using numerical approximations for simplicity)
\begin{align*} 
\mu  \lb 0, \frac{1}{2} \rb  &= p_0\mu\lb0, 1\rb &= 0.290170901\\
\mu \lb \frac{1}{2}, 1 \rb &= p_0\mu[1, 2] + p_1\mu[0, 1]  &= 0.559508468\\
\mu \lb 1, \frac{3}{2} \rb &= p_0\mu[2, 3] + p_1\mu[1, 2] + p_2\mu[0, 1] &= 0.227670901\\
\mu \lb \frac{3}{2}, 2 \rb &= p_1\mu[2, 3] + p_2\mu[1, 2] + p_3\mu[0, 1] &= -0.061004234\\
\mu \lb 2, \frac{5}{2} \rb &= p_2\mu[2, 3] + p_3\mu[1, 2] &= -0.017841801\\
\mu \lb \frac{5}{2}, 3 \rb &= p_3\mu[2, 3] &= 0.001495766.
\end{align*}

Continuing to calculate $\mu$  for the natural intervals of length $\frac{1}{4}$ and $\frac{1}{8}$ yields the step function approximations of $\phi$ illustrated in Figure ~\ref{fig:Refining}.

\begin{figure}
\includegraphics[width=\linewidth]{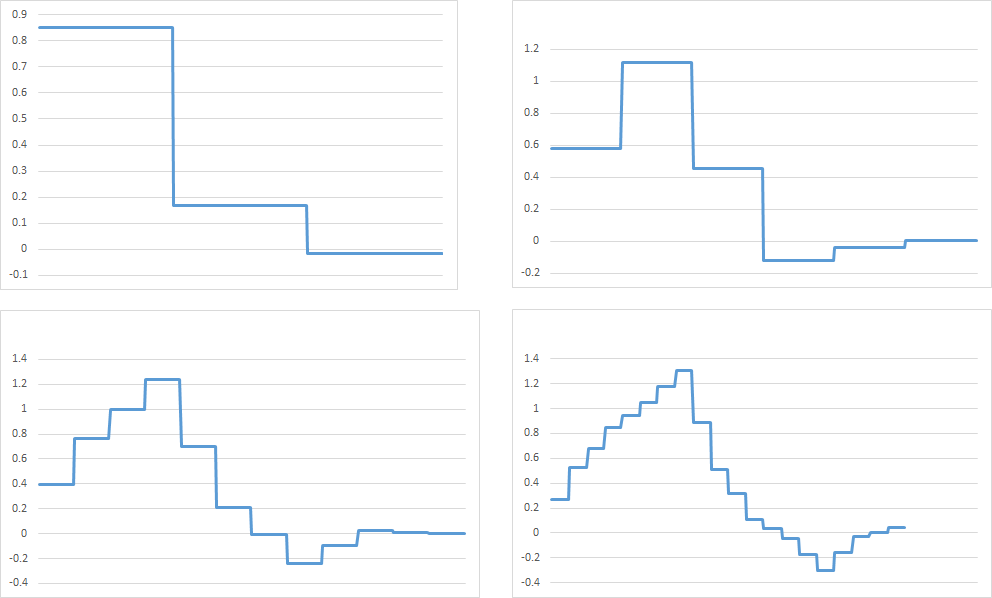}
\caption{Dyadic step-function approximations of scaling function D4. The value of the approximation on a dyadic interval is the measure $\mu$ of the interval divided by its length, that is, the density relative to $\Sigma_n$.}
\label{fig:Refining}
\end{figure}

\begin{rem}
Here we have used the standard tiles based on $[0,1]$, which is justified by the fact that we know $\mu$ will be absolutely continuous, since its density will be the D4 wavelet. However, to return to the considerations of the end of Section \ref{Sect:Prelim}, note that all of the computations remain valid if all intervals of the form $[a,b]$ are replaced with the intervals $(a,b]$, except that we then need to include the interval $(-1,0]$ in order to cover the entire support of $\mu$, namely $[0,3]$. However, we immediately see that the system of equations for $\mu(-1,0]$, $\mu(0,1]$, $\mu(1,2]$, and $\mu(2,3]$ can only be consistent if $\mu(-1,0]=0$. Since $\mu(-1,0]=\mu\{0\}$, this shows that $(-1,0]$ is $\mu$-null, and we are reduced to the above computations. Nonetheless, this illustrates concretely how the use of the modified tile $(0,1]$ allows one to consider solutions $\mu$ that may not have densities.
\end{rem}

\section{A correspondence of scaling functions}\label{Sect:GundyMethod}
As noted, it is helpful to compare the Twin Dragon tile, $T(+C_3, \mathcal{D}_1)$, with the unit interval $\{x: 0 \leq x \leq 1 \}$, $T(2, \mathcal{D}_1)$, viewed as a tile with dilation 2 and a digit set $\mathcal{D}_1=\{0, 1\}$. It is worth noting that the tiles $T(+C_3, \mathcal{D}_1)$ and $T(2, \mathcal{D}_1)$ both have the property that they contain exactly two lattice boundary points. The unit interval contains the points $0$ and $1$ while the twin dragon contains the points $0$ and $-i$. These similarities are significant for a procedure that maps a space of binary sequences into the spaces $\mathbb{Z}$ and $\mathbb{Z}^2$. This coding is generated by the pair $(2, \mathcal{D}_1)$ on one hand, and by $(+C_3, \mathcal{D}_1)$ on the other.

In one dimension, the coding procedure is performed simply by writing the real number in its binary representation. In two dimensions, the coding procedure is performed in a similar manner, except that the base for this representation is $1+i$. The first of these codings will map onto the non-negative half of the real line. Similarly, the latter will map onto half of the complex plane in some way (see \cite{Gundy10}). This common coding scheme is used in \cite{Gundy10} to give a correspondence between scaling functions in these two cases. In particular, this correspondence gives the principal result of \cite{Gundy10}, which states that if there exists a scaling function in one-dimension with coefficients $(p_k)$ in the dilation equation, then there exists a scaling function in two-dimensions with the same coefficients in the dilation equation with dilation by a factor of $M$, where $M$ belongs to the class $+C_3$.

It is worth noting, though, that this correspondence does not preserve continuity. While this is not surprising, in light of the construction, we take a moment to verify and underscore this point. Consider for example, the Daubechies' D4 scaling function from the last section. In one dimension, this family of scaling functions is notable for giving wavelets with regularity, and in particular, D4 is continuous. However, the corresponding scaling function in two dimensions fails to be continuous, as we now show. (Figure ~\ref{fig:D4Gundy} illustrates the transformation for the D4 scaling function on the interval $[0, 1]$ to the Twin Dragon.  The coloring in this figure represents the height of the function lying over the plane.)

\begin{figure}
  \includegraphics[width=\linewidth]{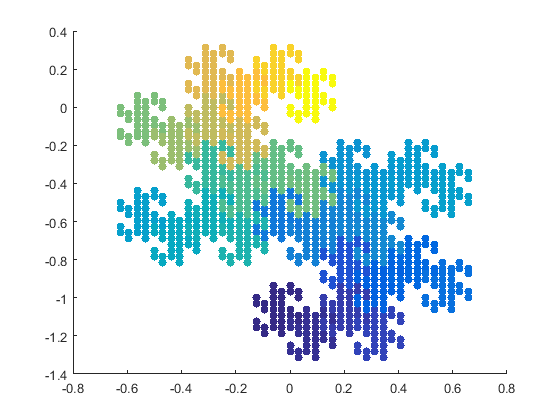}
  \caption{The D4 scaling function restricted to $[0, 1]$, approximated to the refinement on intervals of length $\frac{1}{2^5}$, translated to the plane. The coloring in this figure represents the height of the function lying over the plane; the darker blue representing higher values and the yellow representing lower values. The discontinuity, which can be seen at $(0, -0.5)$, persists in the limit. }
  \label{fig:D4Gundy}
\end{figure}

Let $\phi$ be the D4 scaling function (on $\bR$). First, we use the classical cascade algorithm to compute that 
\[
 \phi \lp \frac{1}{4} \rp  = \frac{2+ \sqrt{3}}{16} \quad\text{and}\quad
 \phi \lp \frac{3}{4} \rp   = \frac{3+2\sqrt{3}}{16}.
\]
The dyadic expansion of the points $\frac{1}{4}$ and $\frac{3}{4}$ are, respectively, $.01$ and $.10\bar{1}$. Both of these expansions correspond with the complex number $\frac{-i}{2}$ when using the base $1+i$. Let $T_{.01}$ denote the quarter-twin dragon which begins with the radix expansion $.01$ and $T_{.10}$ denote the quarter-twin dragon which begins with the radix expansion $.10$.  Then, if we consider the scaling function $\hat{\phi}$ on $\bR^2$ given by $\phi$ under the correspondence, we see that
\[\begin{split}
\lim_{x \rightarrow \frac{-i}{2} \text{ via } T_{.01}}\hat{\phi}(x) &=\frac{2+ \sqrt{3}}{16} \\
\text{while}\quad \lim_{x \rightarrow \frac{-i}{2} \text{ via } T_{.10}} \hat{\phi}(x) &=\frac{3+2\sqrt{3}}{16},
\end{split}\]
where the limits are understood as indicating that there is a sequence approaching $-i/2$ contained in the relevant tile along which this limiting behavior holds. In particular, $\hat{\phi}$ is discontinuous at $-i/2$.


\section{A cascade algorithm in two dimensions}\label{Sect:TwoDComp}

In this section, we restrict our attention to $\bR^2$. We recall that $T$ is the twin dragon, and that translates of $T$ by $\Zed^2$ tile the plane. Further, in this context we have that 
\[
 \Sigma_n=\sigma\lp\lc z+\frac{T}{(1+i)^n}\rc_{z\in \frac{1}{(1+i)^n}\Zed^2}\rp .
\]

We now consider two families of examples in $\bR^2$, one under the orthogonality assumptions of Section \ref{Sect:OrthoConds} and one under the probability assumptions of Section \ref{Sect:ProbConds}. In both cases, the goal is similar to that of Section \ref{Sect:OneDComp}, to illustrate a version of the cascade algorithm to compute the conditional expectations of $\mu$. The most interesting aspects are to determine which translates of $T$ intersect the support of $\mu$, the matrix $A$ relating the values of $\mu$ on these tiles, and the structure of the right 1-eigenspace of $A$. From here, the restriction of $\mu$ to the $\Sigma_n$ can be computed recursively just as in Section \ref{Sect:OneDComp}. However, determining the support of $\mu$ in two dimensions is more intricate than in one dimension. One reason is that the twin dragon $T$ is a more complicated shape than the unit interval $[0,1]$, but also, while $\sum_{k=0}^{2N-1} [0, 1] = [0, 2N]$, the analogue is not true in higher dimensions, $\sum_{k=0}^{2N-1} T \neq 2N \cdot T$.

In both families of examples, even though the support of $\{p_k\}$ contains no more than 4 points, the computations are sufficiently long that we use Sage to handle the (symbolic) linear algebra and a simple Python script to iteratively compute the points of $S_n$, as described in equation \eqref{Eqn:SuppOfMuN}.

\subsection{A four coefficient case under orthonormality conditions}

We consider that case when $\tilde{\Gamma}=\{0, 1, 1+i, 2+i\}$; that is when, $p_0$, $p_1$, $p_{1+i}$, and $p_{2+i}$ are the only non-zero coefficients. To begin, we make no other assumptions on the $p_j$, although we suppose that the $\mu_n$ converge weakly a (finite) signed measure $\mu$, absolutely continuous with respect to Lebesgue measure and satisfying the corresponding dilation equation, and we try to compute $\mu$ on $\Sigma_0$.

We begin by determining which $z\in \Zed^2$ are such that $z+T$ intersects $\supp(\mu_n)$ in a set of positive (Lebesgue) measure; denote the set of such $z$ by $\tilde{S}$. Such a $z$ must satisfy (see equation \eqref{Eqn:SuppOfMuN} and note that $\cup_{n=1}^{\infty}S_n$ is dense in the support of $\mu$)
\[
z+\sum_{n=1}^{\infty} \frac{\alpha_n}{(1+i)^n} = \sum_{n=1}^{\infty} \frac{\beta_n}{(1+i)^n}
\]
for some sequences of coefficients $\alpha_n\in\{0,1\}$ and $\beta_n\in\{0,1,1+i,2+i\}$. But that means there must exist a sequence $\tilde{\alpha}_n\in\{0,1,-1,2,i,1+i,2+i\}$ such that
\[
z = \sum_{n=1}^{\infty} \frac{\tilde{\alpha}_n}{(1+i)^n} .
\]
Then by the same basic estimate as used to compute $R$ in Section \ref{Sect:Support}, we see that no Gaussian integer with modulus greater than $\sqrt{5}\cdot \frac{\sqrt{2}}{\sqrt{2}-1} \approx 7.6344$ is in $\tilde{S}$. This leaves a finite set of candidates for $\tilde{S}$.

From the other direction, by computing $S_{12}$ and seeing which translated tile $z+T$ each of the points belongs to, we find 14 Gaussian integers that must be in $\tilde{S}$, which we give in order (as a row vector) as follows
\[
S = \lb 0, -i, 1-i, 1, 1+i, i, -1+i, -1, -1-i, -2i, 1-2i, 2-2i, 2-i, 2 \rb .
\]

Next, we consider the Gaussian integers with modulus less than 8 but which are not included in $S$. The idea is as follows. Suppose we pick a Gaussian integer $z$ and apply the dilation equation some number of times. This will express $\mu\lp z+T\rp$ as a linear combination of the measures of some other translated tiles, say $\mu\lp z_n+T\rp$ for $1\leq n\leq N$. But if $|z_n|\geq 8$ for all $n$, then by our previous remarks about $\supp(\mu)$, we have $\mu\lp z_n+T\rp=0$ for $1\leq n\leq N$. Then $\mu\lp z+T\rp=0$, and further, this reasoning applies to any Borel subset of $z+T$. Thus $z\not\in\tilde{S}$. We say that such a Gaussian integer $z$ is ``pushed out.'' If we carry out this procedure for every Gaussian integer with modulus less than or equal to 8 that is not in $\set(S)$ (where $\set(S)$ denotes the components of the vector $S$, consider as a set), we find there are 14 such Gaussian integers which don't get pushed out. We give these 14 points, in order, as
\begin{align*}
S^{\prime} = [ &2i, -1+2i, -2+i, -2, -2-i, -1-2i, -3i,\\ 
		\quad & 1-3i, 2-3i, 3-2i, 3-i,  3, 2+i, 1+2i ] .\\
\end{align*}

Now we know that $\tilde{S}\subset\set(S)\cup\set(S')$. Consider the 28 points given in order by $S\oplus S^{\prime}$, which is the vector whose first 14 components are given by $S$ and whose last 14 components are given by $S^{\prime}$. We use $(S)_n$ to denote the $n$th component of the vector $S$. Let $V = v^{S}\oplus v^{S^{\prime}}$ be the vector of real numbers, the $k$th entry of which is $\mu(\lp S\oplus S^{\prime} \rp _k+T)$ for $1\leq k\leq 28$. We are interested in computing the entries of $V$. 

\begin{rem}Even for the four points $0$, $1$, $1+i$, and $2+i$, the size of the vectors and matrices under consideration is unwieldy. So even though $V$ is more natural as a column vector, we write it as a row vector to save space and then transpose it as necessary. 
\end{rem}

First note that $(1+i)T = T\cup (1+T)$. Thus, from the dilation equation, we have
\[\begin{split}
\mu\lp T\rp & =  p_0 \mu\lp T\cup(1+T) \rp +  p_1 \mu\lp (-1+T)\cup(T) \rp +\\
& \quad\quad p_{1+i}  \mu\lp (-i-1+T)\cup(-i+T)\rp + p_{2+i} \mu\lp (-i-2+T)\cup(-i-1+T)  \rp .
\end{split}\]
Because any two translates of $T$ by distinct Gaussian integers are disjoint up to a set of Lebesgue measure zero (by the tiling property), and $\mu$ is absolutely continuous with respect to Lebesgue measure, we can split each of the four terms of the right-hand side of the above equation to get
\[\begin{split}
\mu\lp T\rp & =  p_0 \mu \lp T \rp + p_0 \mu \lp 1+T \rp +  p_1 \mu \lp -1+T \rp + p_1 \mu \lp T \rp \\
& \quad\quad + p_2  \mu \lp -i-1+T \rp + \mu \lp -i+T \rp + p_3 \mu \lp -i-2+T \rp + p_3 \mu \lp -i-1+T  \rp .
\end{split}\]
Further, all of the translates on the right-hand side belong to $\set(S) \cup \set(S^{\prime})$, so none of these are zero a priori, so we have (in terms of $V$):
\[
V_1 = \lp p_0+p_1\rp V_1 + p_{1+i} V_2 + p_0 V_4 + p_1 V_8 + \lp p_{1+i} +p_{2+i} \rp V_9 + p_{2+i} V_{19} .
\] 
A similar computation can be performed for the other 27 components of $V$, where any translated tile involving a shift by a Gaussian integer outside of $\set(S)\cup \set(S^{\prime})$ that appears is discarded. The result is a system of 28 linear equations, which we can write as
\[
\lp V\rp^{\intercal} = \hat{A}(p_0,p_1,p_{1+i},p_{2+i}) \lp V\rp^{\intercal},
\]
where $ \hat{A}(p_0,p_1,p_{1+i},p_{2+i})$ is a $28\times 28$ matrix. We find that $\hat{A}$ has a block upper-triangular decomposition as
\[
\hat{A} =
\begin{bmatrix}
A & * \\
0 & A^{\prime}
\end{bmatrix},
\]
where, if we let $P_{0,1}=p_0+p_1$ and $P_{2,3}=p_{1+i}+p_{2+i}$, $A$ is
\[
\begin{bmatrix}
P_{0,1} & p_{1+i} & 0 & p_{0} & 0 & 0 & 0 & p_{1} & P_{2,3} & 0 & 0 & 0 & 0 & 0 \\
0 & p_{1} & P_{0,1} & 0 & 0 & 0 & 0 & 0 & 0 & P_{2,3} & p_{1+i} & 0 & p_{0} & 0 \\
0 & p_{2+i} & P_{2,3} & p_{1} & 0 & 0 & 0 & 0 & 0 & 0 & 0 & 0 & p_{1+i} & P_{0,1} \\
P_{2,3} & 0 & 0 & p_{1+i} & P_{0,1} & p_{1} & 0 & p_{2+i} & 0 & 0 & 0 & 0 & 0 & 0 \\
0 & 0 & 0 & 0 & 0 & p_{1+i} & P_{2,3} & 0 & 0 & 0 & 0 & 0 & 0 & 0 \\
0 & 0 & 0 & 0 & 0 & p_{0} & P_{0,1} & p_{1+i} & 0 & 0 & 0 & 0 & 0 & 0 \\
0 & 0 & 0 & 0 & 0 & 0 & 0 & p_{0} & 0 & 0 & 0 & 0 & 0 & 0 \\
0 & p_{0} & 0 & 0 & 0 & 0 & 0 & 0 & P_{0,1} & 0 & 0 & 0 & 0 & 0 \\
0 & 0 & 0 & 0 & 0 & 0 & 0 & 0 & 0 & P_{0,1} & p_{0} & 0 & 0 & 0 \\
0 & 0 & 0 & 0 & 0 & 0 & 0 & 0 & 0 & 0 & p_{1} & P_{0,1} & 0 & 0 \\
0 & 0 & 0 & 0 & 0 & 0 & 0 & 0 & 0 & 0 & p_{2+i} & P_{2,3} & p_{1} & 0 \\
0 & 0 & 0 & 0 & 0 & 0 & 0 & 0 & 0 & 0 & 0 & 0 & p_{3} & 0 \\
0 & 0 & 0 & p_{2+i} & 0 & 0 & 0 & 0 & 0 & 0 & 0 & 0 & 0 & P_{2,3} \\
0 & 0 & 0 & 0 & P_{2,3} & p_{2+i} & 0 & 0 & 0 & 0 & 0 & 0 & 0 & 0
\end{bmatrix} ,
\]
$A^{\prime}$ is
\[
\begin{bmatrix}
0 & p_{0} & p_{1+i} & 0 & 0 & 0 & 0 & 0 & 0 & 0 & 0 & 0 & 0 & 0 \\
0 & 0 & p_{0} & 0 & 0 & 0 & 0 & 0 & 0 & 0 & 0 & 0 & 0 & 0 \\
0 & 0 & 0 & 0 & p_{0} & 0 & 0 & 0 & 0 & 0 & 0 & 0 & 0 & 0 \\
0 & 0 & 0 & 0 & 0 & p_{0} & 0 & 0 & 0 & 0 & 0 & 0 & 0 & 0 \\
0 & 0 & 0 & 0 & 0 & 0 & p_{0} & 0 & 0 & 0 & 0 & 0 & 0 & 0 \\
0 & 0 & 0 & 0 & 0 & 0 & p_{1} & P_{0, 1} & p_{0} & 0 & 0 & 0 & 0 & 0 \\
0 & 0 & 0 & 0 & 0 & 0 & 0 & 0 & p_{1} & 0 & 0 & 0 & 0 & 0 \\
0 & 0 & 0 & 0 & 0 & 0 & 0 & 0 & p_{2+i} & p_{1} & 0 & 0 & 0 & 0 \\
0 & 0 & 0 & 0 & 0 & 0 & 0 & 0 & 0 & p_{2+i} & 0 & 0 & 0 & 0 \\
0 & 0 & 0 & 0 & 0 & 0 & 0 & 0 & 0 & 0 & 0 & p_{2+i} & 0 & 0 \\
0 & 0 & 0 & 0 & 0 & 0 & 0 & 0 & 0 & 0 & 0 & 0 & p_{2+i} & 0 \\
0 & 0 & 0 & 0 & 0 & 0 & 0 & 0 & 0 & 0 & 0 & 0 & 0 & p_{2+i} \\
P_{2, 3} & p_{2+i} & 0 & 0 & 0 & 0 & 0 & 0 & 0 & 0 & 0 & 0 & 0 & p_{1+i} \\
0 & p_{1+i} & 0 & 0 & 0 & 0 & 0 & 0 & 0 & 0 & 0 & 0 & 0 & 0
\end{bmatrix},
\]
$*$ is a $14\times14$ block that we don't compute explicitly (for reasons that will be clear in a moment), and ``$0$'' is the $14\times 14$ zero matrix.

\begin{rem}The order of the components in $S$ is chosen so that they spiral out clockwise as points in the plane. This is related to the action multiplication by $1+i$ on the plane, and, maybe more importantly, gives $A$ and the structure seen above of one roughly diagonal band and one roughly above-diagonal band.
\end{rem}

Next, we compute that 
\[
\det\lp A^{\prime}-I\rp = 1-p_0^3 p_1 p_{1+i} p_{2+i}^3
\]
(where $I$ is the $14\times 14$ identity matrix). Note that under either the orthogonality conditions of Section \ref{Sect:OrthoConds} or the probability conditions of Section \ref{Sect:ProbConds}, the $p_k$ all have absolute value no more than 1, and at most one of them can have absolute value 1. Thus, under either set of conditions, this determinant is always positive, and in particular 1 cannot be an eigenvalue of $A^{\prime}$. This means that in order for $\lp V\rp^{\intercal}$ to be a right 1-eigenvector for $\hat{A}(p_0,p_1,p_{2},p_{3})$, we must have that $v^{S^{\prime}}$ is the zero vector. One consequence of this (and the upper triangular block structure of $\hat{A}$) is that $\set(S^{\prime})$ is not in $\tilde{S}$. In other words, we have that $\tilde{S}=\set(S)$. (Note that coincides nicely with what we saw by considering points which had ``binary'' expansions up to 12 places.) Further, we see that we only need to consider the system
\begin{equation}\label{Eqn:For4Coefs}
\lp v^S\rp^{\intercal} = A(p_0,p_1,p_{1+i},p_{2+i}) \lp v^S\rp^{\intercal} ,
\end{equation}
for the $14\times 14$ matrix $A$ given above. (This is the reason we don't bother to compute the upper right block ``*'', and the reason we chose our notation in this way.) The right 1-eigenspace is always at least one-dimensional, since the vector of all 1's is a left 1-eigenvector. We can see this easily for the $A$ above, since each column sums to $p_0+p_1+p_{1+i}+p_{2+i}=1$.

For concreteness, let us now assume that $p_0$, $p_1$, $p_{1+i}$, and $p_{2+i}$ satisfy the orthogonality conditions of Section \ref{Sect:OrthoConds}. Then all of our assumptions on $\mu$ are satisfied as well, and we see that $\mu$ restricted to $\Sigma_0$ is given by a vector $v^S$ satisfying equation \eqref{Eqn:For4Coefs}. Thus, if the right 1-eigenspace of $A$ is 1-dimensional, it determines $\mu$, up to scaling, in the sense that, given $\mu(\bR^2)$, $\mu$ restricted to $\Sigma_0$ is given by the unique corresponding basis for the right 1-eigenspace, and then $\mu$ is determined recursively as discussed above.

To illustrate this concretely, we now specialize to the case when $p_0=(1+\sqrt{3})/8$, $p_1= (3+\sqrt{3})/8$, $p_2=(3-\sqrt{3})/8$, and $p_3=(1-\sqrt{3})/8$ (in order to to mimic the one-dimensional D4 case). We find that $A$ has a 1-dimensional right 1-eigenspace, which is spanned by
\[\begin{split}
\big[ & 0.988473215486,\,0.0991927845318,\,0.0476287661136,\,0.0956000986321,\, \\
& 0.00421010706117,\,0.0221507197936,\,0.0104401680866,\, \\
& 0.0305709339159,\,-0.00354125079226,\,-0.00205532069064,\,-0.00475426145644,\, \\
& 0.000811194910171,\,-0.00886490283771,\,-0.00174490784237 \big] .
\end{split}\]
(Note that we have switched to a decimal approximation in order for this vector to fit on the page.) So this gives $\lp v^S\rp^{\intercal} $, and hence $\mu|_{\Sigma_0}$, uniquely up to scaling.

\begin{rem}
As in Section \ref{Sect:OneDComp}, these computations could be extended in a straightforward way to the case when $\mu$ is not assumed to be absolutely continuous by replacing $T$ with a suitable modification (analogous to replacing $[0,1]$ with $(0,1]$). However, as indicated at the end of Section \ref{Sect:Prelim}, we are unaware of any work giving such a modification of the twin dragon.
\end{rem}

\subsection{A three coefficient case under the probability conditions}

We summarize the results in a parallel way to the previous section, and in particular, we use the parallel notation in what follows. Now we assume that $\tilde{\Gamma}=\{0, 1, 1+i, 2+i\}$; that is, that $p_0$, $p_1$, and $p_{i}$ are the only non-zero coefficients.
We also again assume that $\mu$ is a (finite) signed measure on $\sB\lp \bR^2\rp$ such that $\mu$ is absolutely continuous with respect to Lebesgue measure and satisfies the dilation equation associated to $p_0$, $p_1$, and $p_i$.

We begin by determining $\tilde{S}$. First, no Gaussian integer with modulus greater than $\frac{2}{\sqrt{2}-1} \approx 4.828$ is in $\tilde{S}$. Further, by computing
\[
\lc \sum_{n=1}^{12} \frac{\tilde{\gamma}_n}{\lp 1+i\rp^n} : \tilde{\gamma}_n\in\lc 0,1,i \rc \rc ,
\]
we see that there are at least 16 Gaussian integers in $\tilde{S}$. Namely, $\tilde{S}$ contains $\set(S)$ where $S$ is the vector
\[
S=  \lb 0 , i , 1+i , 1 , 1-i , -i , -1 , -1+i , 2i , 1+2i , 2+i , 2 , 2-i , 1-2i , -2i , -1-i \rb .
\]
If we now look at all the Gaussian integers with modulus no greater than $\frac{2}{\sqrt{2}-1}$ which are not in $\set(S)$, we see that (after applying the dilation equation, say 10 times) they all ``get pushed'' outside of the ball of radius 5 centered at the origin. In other words, we see that $\tilde{S}=\set(S)$ (and there is no analogue of $S'$ in this situation).

We again consider the vector $v^S$. The dilation equation gives
\[
\lp v^S\rp^{\intercal} = A(p_0,p_1,p_{i}) \lp v^S\rp^{\intercal} ,
\]
where $A(p_0,p_1,p_i)$ is the $16\times16$ matrix given by
\[
\begin{bmatrix}
P_{0,1} & 0 & 0 & p_{0} & p_i & p_i & p_{1} & 0 & 0 & 0 & 0 & 0 & 0 & 0 & 0 & 0 \\
p_i & p_{0} & 0 & 0 & 0 & 0 & p_i & P_{0,1} & 0 & 0 & 0 & 0 & 0 & 0 & 0 & 0 \\
0 & p_i & p_i & 0 & 0 & 0 & 0 & 0 & p_{0,1} & p_{0} & 0 & 0 & 0 & 0 & 0 & 0 \\
0 & p_{1} & P_{0,1} & p_i & 0 & 0 & 0 & 0 & 0 & 0 & p_{0} & p_i & 0 & 0 & 0 & 0 \\
0 & 0 & 0 & p_{1} & 0 & 0 & 0 & 0 & 0 & 0 & 0 & P_{0,1} & p_i & 0 & 0 & 0 \\
0 & 0 & 0 & 0 & P_{0,1} & p_{1} & 0 & 0 & 0 & 0 & 0 & 0 & p_{0} & p_i & 0 & 0 \\
0 & 0 & 0 & 0 & 0 & p_{0} & 0 & 0 & 0 & 0 & 0 & 0 & 0 & 0 & p_i & P_{0,1} \\
0 & 0 & 0 & 0 & 0 & 0 & p_{0} & 0 & 0 & 0 & 0 & 0 & 0 & 0 & 0 & p_i \\
0 & 0 & 0 & 0 & 0 & 0 & 0 & p_i & 0 & 0 & 0 & 0 & 0 & 0 & 0 & 0 \\
0 & 0 & 0 & 0 & 0 & 0 & 0 & 0 & p_i & 0 & 0 & 0 & 0 & 0 & 0 & 0 \\
0 & 0 & 0 & 0 & 0 & 0 & 0 & 0 & 0 & p_i & 0 & 0 & 0 & 0 & 0 & 0 \\
0 & 0 & 0 & 0 & 0 & 0 & 0 & 0 & 0 & p_{1} & p_i & 0 & 0 & 0 & 0 & 0 \\
0 & 0 & 0 & 0 & 0 & 0 & 0 & 0 & 0 & 0 & p_{1} & 0 & 0 & 0 & 0 & 0 \\
0 & 0 & 0 & 0 & 0 & 0 & 0 & 0 & 0 & 0 & 0 & 0 & p_{1} & 0 & 0 & 0 \\
0 & 0 & 0 & 0 & 0 & 0 & 0 & 0 & 0 & 0 & 0 & 0 & 0 & p_{1} & 0 & 0 \\
0 & 0 & 0 & 0 & 0 & 0 & 0 & 0 & 0 & 0 & 0 & 0 & 0 & p_{0} & P_{0,1} & 0
\end{bmatrix}
\]
and $P_{0, 1}=p_0 + p_1$.

So $v^S$ must be a right 1-eigenvector of $A$, and if the right 1-eigenspace is 1-dimensional, then $v^S$, and hence $\mu$, must be unique up to scaling.

To complement the previous section, we now assume that $p_0$, $p_1$, and $p_i$ satisfy the probability conditions of Section \ref{Sect:ProbConds}. Then we know that $\mu$ will be absolutely continuous if $p_0=\frac{1}{2}$ and $p_1$ and $p_i$ are strictly between 0 and 1 and sum to $1/2$. Restricting our attention to this case, we have one degree of freedom; namely, set $p_1=(1/2)-p_i$ and let $p_i\in(0,1/2)$.  Making these substitutions explicitly in $A$ makes the resulting matrix too big to include here. However, for any value of $p_i$, the resulting matrix has a 1-dimensional right 1-eigenspace.

Specializing even further, we take the concrete example with $p_0=1/2$ and $p_1=p_i=1/4$. Then we get that $A(1/2,1/4,1/4)$ is
\[
\begin{bmatrix}
\frac{3}{4} & 0 & 0 & \frac{1}{2} & \frac{1}{4} & \frac{1}{4} & \frac{1}{4} & 0 & 0 & 0 & 0 & 0 & 0 & 0 & 0 & 0 \\
\frac{1}{4} & \frac{1}{2} & 0 & 0 & 0 & 0 & \frac{1}{4} & \frac{3}{4} & 0 & 0 & 0 & 0 & 0 & 0 & 0 & 0 \\
0 & \frac{1}{4} & \frac{1}{4} & 0 & 0 & 0 & 0 & 0 & \frac{3}{4} & \frac{1}{2} & 0 & 0 & 0 & 0 & 0 & 0 \\
0 & \frac{1}{4} & \frac{3}{4} & \frac{1}{4} & 0 & 0 & 0 & 0 & 0 & 0 & \frac{1}{2} & \frac{1}{4} & 0 & 0 & 0 & 0 \\
0 & 0 & 0 & \frac{1}{4} & 0 & 0 & 0 & 0 & 0 & 0 & 0 & \frac{3}{4} & \frac{1}{4} & 0 & 0 & 0 \\
0 & 0 & 0 & 0 & \frac{3}{4} & \frac{1}{4} & 0 & 0 & 0 & 0 & 0 & 0 & \frac{1}{2} & \frac{1}{4} & 0 & 0 \\
0 & 0 & 0 & 0 & 0 & \frac{1}{2} & 0 & 0 & 0 & 0 & 0 & 0 & 0 & 0 & \frac{1}{4} & \frac{3}{4} \\
0 & 0 & 0 & 0 & 0 & 0 & \frac{1}{2} & 0 & 0 & 0 & 0 & 0 & 0 & 0 & 0 & \frac{1}{4} \\
0 & 0 & 0 & 0 & 0 & 0 & 0 & \frac{1}{4} & 0 & 0 & 0 & 0 & 0 & 0 & 0 & 0 \\
0 & 0 & 0 & 0 & 0 & 0 & 0 & 0 & \frac{1}{4} & 0 & 0 & 0 & 0 & 0 & 0 & 0 \\
0 & 0 & 0 & 0 & 0 & 0 & 0 & 0 & 0 & \frac{1}{4} & 0 & 0 & 0 & 0 & 0 & 0 \\
0 & 0 & 0 & 0 & 0 & 0 & 0 & 0 & 0 & \frac{1}{4} & \frac{1}{4} & 0 & 0 & 0 & 0 & 0 \\
0 & 0 & 0 & 0 & 0 & 0 & 0 & 0 & 0 & 0 & \frac{1}{4} & 0 & 0 & 0 & 0 & 0 \\
0 & 0 & 0 & 0 & 0 & 0 & 0 & 0 & 0 & 0 & 0 & 0 & \frac{1}{4} & 0 & 0 & 0 \\
0 & 0 & 0 & 0 & 0 & 0 & 0 & 0 & 0 & 0 & 0 & 0 & 0 & \frac{1}{4} & 0 & 0 \\
0 & 0 & 0 & 0 & 0 & 0 & 0 & 0 & 0 & 0 & 0 & 0 & 0 & \frac{1}{2} & \frac{3}{4} & 0
\end{bmatrix}.
\]
The right 1-eigenspace is one-dimensional and is spanned by the vector
\[\begin{split}
& \lb 1  ,\,\frac{767262}{1370695},\,\frac{795934}{4112085},\,\frac{104324}{274139},\, \frac{130917}{1370695},\,\frac{131013}{1370695},\,\frac{13105}{274139},\,\frac{32768}{1370695}, \right. \\
&\qquad \left. \frac{8192}{1370695},\,\frac{2048}{1370695},\,\frac{512}{1370695},\,\frac{128}{274139},\, \frac{128}{1370695},\,\frac{32}{1370695},\,\frac{8}{1370695},\,\frac{22}{1370695} \rb .
\end{split}\]
This is normalized so that the first coordinate is 1, rather than being normalized to give a probability. Nonetheless, this determines $v^S$ and $\mu|_{\Sigma_0}$ up to normalization.

\bibliographystyle{amsplain}

\providecommand{\bysame}{\leavevmode\hbox to3em{\hrulefill}\thinspace}
\providecommand{\MR}{\relax\ifhmode\unskip\space\fi MR }
\providecommand{\MRhref}[2]{%
  \href{http://www.ams.org/mathscinet-getitem?mr=#1}{#2}
}
\providecommand{\href}[2]{#2}

\end{document}